\documentclass[twoside,10pt]{article}
 \usepackage{amsfonts,amscd,amssymb}
 \newcommand{\seqn}{\begin{equation}}
 \newcommand{\eeqn}{\end{equation}}
 \newcommand{\seqna}{\begin{eqnarray}}
 \newcommand{\eeqna}{\end{eqnarray}}
 \newenvironment{proof}{{\it Proof.}}{\hfill $ \square $ \vskip 4mm}

\newtheorem{definition}{Definition}[section]
 \newtheorem{lemma}[definition]{Lemma}
 \newtheorem{proposition}[definition]{Proposition}
 \newtheorem{corollary}[definition]{Corollary}
 
 \newtheorem{remarks}[definition]{Remarks}
 \newtheorem{theorem}[definition]{Theorem}

 \let\BBox\Box
 \def\Box{$\BBox$}

 \def\hfl#1#2{\smash{\mathop{\hbox to 10mm{\rightarrowfill}}
 \limits^{\scriptstyle#1}_{\scriptstyle#2}}}

 \def\lettrine#1#2#3{\noindent\hangindent#1\hangsfter-#2
 \hskip-#1\smash{\hbox to#1{#3\hfill}}\ignorespaces}

 \title{\bf SOME HOMOLOGICAL RESULTS IN THE CATEGORY OF COLOUR $(A,H)$-HOPF MODULES}
 \author{{\bf
 Thomas Gu\'ed\'enon}\\ {\it D\'epartement de Math\'ematiques} \\ {\it Universit\'e de Ziguinchor, BP 523 Ziguinchor, S\'en\'egal}\\ thomas.guedenon@univ-zig.sn}
 \date{}
 
\begin{document}
 	
\maketitle
 
\begin{abstract} Let $\Bbbk$ be a field, $H$ a colour Hopf algebra and $A$ a graded $H$-comodule colour algebra. We give a sufficient condition for a colour $(A,H)$-Hopf module to be injective as a graded $H$-comodule and we deduce relative projectivity in the category of colour $(A,H)$-Hopf modules. We generalize the Fundamental Theorem of $(A,H)$-Hopf modules to the context of colour $(A,H)$-Hopf modules. Using this result, we show that the categories of graded $A^{coH}$-modules and of colour $(A,H)$-Hopf modules are equivalent, $A$ is faithfully flat as a graded right $A^{coH}$-module and is a graded Hopf-Galois extension of $A^{coH}$. Under some assumptions, we show that $M^{coH}$ is a graded $A$-module and we prove that the graded global dimension of $A$ is equal to the graded projective dimension of the graded $A$-module $A^{coH}$. 
\end{abstract}
 
{\bf Mathematics Subject Classification (2020)}: 16T05, 16D40
 
{\bf Keywords}: Colour Hopf algebra; Graded $H$-comodule colour algebra; Colour $(A,H)$-Hopf module; Graded Hopf-Galois extension; Graded global dimension.
 
\section{Introduction}

Let $\Bbbk$ be a field and $H$ a Hopf algebra over $\Bbbk$. Let $ A $ be an $H$-comodule algebra. In \cite[Theorem 1]{Doi}, Doi proved that if there exists a right $H$-comodule map $\theta: H\longrightarrow A $ with $ \theta(1_{H})=1_{A}$, then every right $(A,H)$-Hopf module is an injective $H$-comodule. In the same paper, he proved the Fundamental Theorem in the category of right $(A,H)$-Hopf modules \cite[Theorem 3]{Doi}: if there is an $H$-colinear algebra map $ \phi:H\rightarrow A$, then the $ \Bbbk $-linear map
$$M^{coH}\otimes_{B}A\rightarrow  M; m\otimes_{B} a \mapsto ma $$ is an isomorphism of right $(A,H)$-Hopf modules, where $ B=A^{coH} $ is the subalgebra of coinvariants of $A$. A similar proof works for left $(A,H)$-Hopf modules. These results of Doi have been generalized to the category of Doi-Hopf modules in \cite{Gue2}.

Throughout the paper, $G$ is a multiplicative abelian group with a bicharacter  $(\cdot /\cdot):G\otimes G\rightarrow \Bbbk^{\times}$. 
A colour Hopf algebra is a colour algebra which is a colour coalgebra satisfying some compatibility conditions. Its unique difference from a Hopf algebra is that the comultiplication is an algebra homomorphism, not for the componentwise multiplications, but for the twisted multiplication by Lusztig's rule. The notion of colour Hopf algebras first appeared in the book of Montgomery \cite[10.5.11]{Mon}. 
For general facts and further information on colour Hopf algebras, we refer to \cite{CPV, Fel, Gue1, Gue3, Kac, Mon, Sch1, Sch2, Wang}

When $H$ is a colour Hopf algebra, Wang proved in \cite[Theorem 7]{Wang} the Fundamental theorem in the category of colour Hopf modules. He also proved \cite[Theorem 9]{Wang} that the graded global dimension of a colour Hopf algebra coincides with the projective dimension of the graded trivial $H$-module $\Bbbk$: this is a generalization of Lorenz-Lorenz \cite{LL} in the ungraded case. It is natural to ask if one can obtain an analog of this result in the category of colour $(A,H)$-Hopf modules.

Our aim is to generalize the results of Doi and Wang mentioned above to the context of colour $(A,H)$-Hopf modules, where $H$ is a colour Hopf algebra and $A$ is a $G$-graded $H$-comodule colour algebra. A colour $(A,H)$-Hopf module is a graded left $A$-module and a graded left $H$-comodule such that the graded $A$-action and the graded $ H $-coaction are compatible in the  natural way.

$\bullet$  We prove (Theorem $\ref{theorem1}$) that if there exists a graded $H$-colinear map $\theta : H\longrightarrow A $ with $ \theta(1_{H})=1_{A}$ , then every colour $(A,H)$-Hopf module is a gr-injective graded $H$-comodule ({\bf it is a generalization of \cite[Theorem 1]{Doi}}). If furthermore, $A$ and $H$ are colour-commutative and the bicharacter is symmetric, then every colour $(A,H)$-Hopf module which is injective as a graded $A$-module is injective as a colour $(A,H)$-Hopf module. 

For a colour $(A,H)$-Hopf module $M$, denote by $ M^{coH}$ the graded subspace of coinvariants of $M$. Then $B=A^{coH}$ is a colour subalgebra of $A$, $M^{coH}$ is a graded left $ A^{coH} $-module and $ A\otimes_{B} M^{coH}$ is a colour $(A,H)$-Hopf module. 

$\bullet$ We prove (Theorem $\ref{theorem2}$) that if there exists a graded $H$-colinear map $ \theta : H\longrightarrow A $ which is also a homomorphism of colour algebras, then a Fundamental theorem holds for colour $(A,H)$-Hopf modules, that is, the graded $\Bbbk $-linear map $$A\otimes_{B} M^{coH}\rightarrow  M; a\otimes_{B} m\mapsto am $$ is an isomorphism of colour $(A,H)$-Hopf modules ({\bf it is a generalization of \cite[Theorem 3]{Doi} and \cite[Theorem 7: first assertion]{Wang}}).

$\bullet$ When $A$ is colour-commutative colour $H$-simple, we show that $A^{coH}$ is a colour division ring (Lemma $\ref{lemma8}$). We deduce from the Fundamental theorem that every colour $(A,H)$-Hopf module is graded free as a graded $A$-module (Corollary $\ref{corollary4}$) ({\bf it is a generalization of \cite[Theorem 7: second assertion]{Wang}}). 

$\bullet$ We show (Proposition $\ref{proposition1}$) that the functor 
$$F =A\otimes_{B} - :  {}_{\textnormal{gr}-B}\mathcal{M}\longrightarrow {}_{\textnormal{gr}- A}^{\textnormal{gr}-H}\mathcal{M} $$ is left adjoint to the functor 
$$G :{}_{\textnormal{gr}-A}^{\textnormal{gr}-H}\mathcal{M}\longrightarrow {}_{\textnormal{gr}-B}\mathcal{M}; ~~M\mapsto M^{coH}.$$ 
Using this result and the Fundamental Theorem, we prove that  

(i) the functor $G$ is dual Maschke (see \cite[Theorem 3.4]{CM} for the definition), that is, every object of $_{\textnormal{gr}-A}^{\textnormal{gr}-H}\mathcal{M}$ is $G$-relative projective (Corollary $\ref{corollary5}$).

(ii) the adjoint pair $(F,G)$ is a pair of inverses equivalence of categories, $A$ is faithfully flat as a graded right $B$-module and is a graded Hopf-Galois extension of $B$ (Theorem $\ref{theorem3}$).

$\bullet$ We show under some assumptions that for every colour $(A,H)$-Hopf module $M$, the coinvariant module $M^{coH}$ (in particular, $A^{coH}$) is a graded left $A$-module (Proposition $\ref{proposition4}$), and we prove (Theorem $\ref{theorem4}$) that the graded global dimension of $A$ is equal to the graded projective dimension of the graded $A$-module $A^{coH}$ which is also equal to the projective dimension of the $A$-module $A^{coH}$ ({\bf it is a generalization of \cite[Theorem 9]{Wang}}).

By analogy to the ungraded case, we can call a graded $H$-colinear map $\theta: H \rightarrow A$ such that $\theta(1_H)=1_A$ a graded total integral of $A$. 

All the above notions will be defined in the text. All graded vector spaces will be over $ \Bbbk $. Map always means a graded $ \Bbbk $-linear map, and the unadorned tensor product $ V\otimes W $ is understood to be  
$ V\otimes_{\Bbbk} W $. All expressions of linear algebra are given for homogeneous elements and are supposed to be extended to unhomogeneous elements by linearity.

\section{Preliminary results}

In this section, we will introduce the notions of colour algebras, colour coalgebras, graded modules over a colour algebra and graded comodules over a colour coalgebra. 

\begin{definition}
Let $G$ be a multiplicative abelian group with identity element $e$.\\ A bicharacter $(\cdot /\cdot)$ on $G$ is a map from $G\times G$ to $\Bbbk^{\times}$, where $\Bbbk^{\times}$ is the set of invertible elements of  $\Bbbk$ satisfying
\item[i)] $(g / (g'g''))=(g /g')(g /g'') ~~for~~ all ~~g,g',g''\in G;$ 
\item[ii)] $((g g') / g'')=(g /g'')(g' /g'') ~~for~~ all ~~g,g',g''\in G$;
	\\
It is easy to see that $(e/g)=(g/e)=1_{\Bbbk}$ for all $g\in G$. In particular, for a fixed $g\in G$, the induced map $(g/.):G\longrightarrow \Bbbk^{\times}$ defined by $(g/.)(g')=(g/g')$ is a homomorphism of groups. When $(g/g')(g'/g)=1$, we say that the bicharacter is symmetric: in this case, the conditions i) and ii) are equivalent. 
\end{definition}

A graded vector space $M$ is a $\Bbbk$-vector space with the decomposition \\$M=\bigoplus_{g\in G}M_{g}$, where the $M_{g}$ are $\Bbbk$-subspaces of $M$. The elements of $M_g$ are called the homogeneous elements of degree $g$ of $M$ and they will be denoted by $m_g$. The degree of a homogeneous element $m\in M$ will be denoted by $\mid m\mid$. In this paper, all unspecified elements are homogeneous.

Let $M=\bigoplus_{g\in G}M_{g}$ be a graded vector space. A graded vector subspace of $M$ is a $\Bbbk$-vector subspace $N$ of $M$ such that  $$N=\bigoplus_{g\in G}(N \cap M_{g}), \quad \hbox{that is}, \quad N_{g}=N\cap M_{g}.$$

Let $M$ and $N$ be graded vector spaces. We give $M \otimes N$ a $G$-grading by $$(M\otimes N)_{g}=\bigoplus_{g'g''=g}(M_{g'}\otimes N_{g''}).$$  The twist map $\tau: M\otimes N\longrightarrow N\otimes M$ is defined by $$\tau(m\otimes n)=(\mid m\mid /\mid n\mid)(n\otimes m),~~m~~\in ~~M ~~and ~~n~~\in N.$$ 
A graded $\Bbbk$-linear map from $ M $ to $ N $ is a $\Bbbk$-linear map $ f $ from $ M $ to $ N $ such that $ f(M_{g}) \subset N_{g}$. Clearly, $\tau $ is a graded $k$-linear map.

A graded algebra is a graded vector space $A=\bigoplus_{g\in G}A_{g}$ which is an algebra $A$ over $\Bbbk$ (not necessarily associative  with identity) such that $A_{g}A_{g'}\subseteq A_{gg'}$, for all $g,g'\in G$. 
A colour algebra is a graded algebra which is associative with identity $1_A$. We have $1_A \in A_e$. The field $\Bbbk$ is a colour algebra with the trivial gradation. The multiplication and the unit map of $A$ are graded $\Bbbk$-linear maps.

A colour algebra is colour-commutative if $$ab=(\mid a\mid /\mid b\mid)ba \quad \forall \quad a,b \in A.$$

If $A$ and $B$ are colour algebras, a graded $\Bbbk$-linear map $f:A\longrightarrow B$ which is a $\Bbbk$-algebra homomorphism will be called a homomorphism of colour algebras.\\

The colour tensor product algebra of $A$ and $B$ will be the graded $\Bbbk$-vector space $A\otimes B$ equipped with the multiplication defined by $$(a\otimes b)(c\otimes d)=(\mid b \mid /\mid c \mid)(ac)\otimes (bd), ~~\forall~~a,c\in A,~~\forall~~ b,d\in B.$$
A colour subalgebra $B$ of a colour algebra $A=\bigoplus_{g\in G}A_{g}$ is a graded vector subspace $B$ of $A$ which is a subalgebra of $A$.  

\begin{definition} Let $A$ be a colour algebra. A graded left $A$-module $M$ is a $G$-graded vector space $M=\bigoplus_{g\in G}M_{g}$ which is a left $A$-module such that $A_{g}M_{g'}\subseteq M_{gg'}$ for $g,g'\in G$.
\end{definition}

A graded $A$-submodule $N$ of a graded $A$-module $M=\bigoplus_{g\in G}M_{g}$ is a graded vector subspace $N$ of $M$ which is a left $A$-submodule of $M$.

If $M$ is a graded left $A$-module and if $N$ is a graded $A$-submodule of $M$, then $M/ N$ is a graded left $A$-module: the gradation is given by $$(M/ N)_{g}=M_{g}/ (M_{g}\cap N)=(M_{g}+N)/ N.$$

Let $ M $ and $N$ be graded left $A$-modules. A graded $A$-linear map from $ M $ to $N$ is a graded $\Bbbk$-linear map which is  left $A$-linear.
We denote by $_{\textnormal{gr}-A}\mathcal{M}$  the category of graded left  $A$-modules with graded $A$-linear maps, and by $_{\textnormal{gr}-A}Hom(M,N)$ the graded vector space of graded $ A $-linear maps from $M$ to $N$.

Let us recall the following well-known results of graded ring theory \cite{NV1} and \cite{NV2}. 

- Let $N$ be a graded left $A$-module. For every $g$ in $G$, $N(g)$ is the graded $A$-module obtained from
$N$ by a shift of the gradation by $g$. As vector spaces, $N$ and $N(g)$ coincide, and the actions of $A$ on $N$ and $N(g)$ are the same, but the gradations are related by $N(g)_{g'}=N_{gg'}$ for all $g' \in G$.

- An object of $_{\textnormal{gr}-A}\mathcal{M}$ is projective in $_{\textnormal{gr}-A}\mathcal{M}$ if and only if it is projective in $_{A}\mathcal{M}$, the category of $A$-modules.

- By \cite[Chapter 2, Page 21]{NV2}, each $A(g)$ is a projective object in $_{\textnormal{gr}-A}\mathcal{M}$ for $g \in G$.

- An object of $_{\textnormal{gr}-A}\mathcal{M}$ is free in $_{\textnormal{gr}-A}\mathcal{M}$ if it has an $A$-basis consisting of homogeneous elements or equivalently, if it is isomorphic to some $\bigoplus_{i \in I}A(g_i)$, where $I$ is an index set and $(g_i, i \in I)$ is a family of elements of $G$ \cite[Chapter 2, Page 21]{NV2}. If $M$ is a free object in $_{\textnormal{gr}-A}\mathcal{M}$, we say that $M$ is graded free or gr-free. 

- Any object of $_{\textnormal{gr}-A}{\mathcal M}$ is a quotient of a free object in $_{\textnormal{gr}-A}{\mathcal M}$, and any projective object in $_{\textnormal{gr}-A}\mathcal{M}$ is isomorphic to a direct summand of a free object in $_{\textnormal{gr}-A}\mathcal{M}$.

\begin{definition}	A colour coalgebra $C$ is a graded $\Bbbk$-vector space $C=\bigoplus_{g\in G}C_{g}$ which is a coalgebra such that the counit $\varepsilon_{C} :C\longrightarrow \Bbbk$ and the comultiplication $\bigtriangleup_{C} :C\longrightarrow C\otimes C$ are graded $\Bbbk$-linear maps. 

Note that we have: $\bigtriangleup_{C} (C_{g})\subseteq \bigoplus_{h \in G}C_{gh^{-1}}\otimes C_{h}$ and $\varepsilon_{C}( C_{g})=\{0_k\}$ for $g\neq e$.
\end{definition}

\begin{definition} A colour bialgebra $B$ is a datum $(A, m_{B}, \mu_{B}, \bigtriangleup_{B}, \varepsilon_{B})$ such that\\
$B=\bigoplus_{g\in G}B_{g}$ is a colour algebra with multiplication $m_{B}: B\otimes B\longrightarrow B$ and unit map $\mu_{B}: \Bbbk\longrightarrow B$, $(B,\bigtriangleup_{B}, \varepsilon_{B})$ is a colour coalgebra with respect to the same grading.\\
The counit $\varepsilon_{B} :B\longrightarrow\Bbbk$ and the comultiplication $\bigtriangleup_{B} :B\longrightarrow B\otimes B$ are graded algebras maps in the sense that $$\varepsilon_{B} (bb')=\varepsilon_{B} (b)\varepsilon_{B} (b'), \quad \varepsilon_{B} (1_B)=1_{\Bbbk}; ~~b,b'\in B$$ and $$\bigtriangleup_{B} (bb')=(\mid b_{2}\mid /\mid b'_{1}\mid)(b_{1}b'_{1})\otimes (b_{2}b'_{2}), \quad \bigtriangleup_B(1_B)= 1_B \otimes 1_H ;~~b,b'\in B.$$ 
\end{definition}

\begin{definition} A colour Hopf algebra $H$ is a six-tuple $(H, m_{H}, \mu_{H}, \bigtriangleup_{H}, \varepsilon_{H}, {\mathcal S}_{H})$ such that\\
$(H, m_{H}, \mu_{H}, \bigtriangleup_{H}, \varepsilon_H)$ is a colour bialgebra and ${\mathcal S}_{H}:H\longrightarrow H$ is a graded $\Bbbk$-linear map (called the antipode of $H$) such that $$h_{1}{\mathcal S}_{H}(h_{2})=\varepsilon_{H}(h)1_H= {\mathcal S}_{H}(h_{1})h_{2}$$ for all homogeneous elements $h\in H$, where $\bigtriangleup_{H} (h)=h_{1}\otimes h_{2}$. 
\end{definition}

It is easy to see that for a colour Hopf algebra $H$, the antipode $S_H$ preserves the degree, that is $\mid S_H(h) \mid=\mid h \mid$ for all homogeneous element $h \in H$ and satisfies:
$${\mathcal S}_{H}(hh')=(\mid h\mid /\mid h'\mid){\mathcal S}_{H}(h'){\mathcal S}_{H}(h) \quad \forall h,h' \in H$$ and 
$$\bigtriangleup_{H}({\mathcal S}_{H}(h))= (\mid h_{1}\mid /\mid h_{2}\mid) ({\mathcal S}_{H}(h_{2})\otimes {\mathcal S}_{H}(h_{1})) \quad \forall h,h'\in H.$$

Any Hopf algebra is a colour Hopf algebra: take $G=\{e\}$ (so $(e/e)=1_k$).

If $G$ is any abelian group and $(g/g')=1_{\Bbbk}$ for any $g,g' \in G$, then a graded Hopf algebra is a colour Hopf algebra.

The universal enveloping algebra of a Lie colour algebra is a colour-cocommutative colour Hopf algebra.

If $G= \frac{Z}{2Z}$ and $(g+2\mathbb Z/g'+2\mathbb Z)=(-1)^{g+g'}$, a colour algebra is called a superalgebra in \cite{Sch1}, and a colour Hopf algebra is called a Hopf superalgebra in \cite{Fel,Sch1,Sch2} .

\begin{lemma} \label{lemma1} Let $H$ be a colour Hopf algebra. Let $M$ be a graded vector space. If $m \in M$ and $h \in H$ are homogeneous elements, then $(\mid m\mid /\mid h\mid)\varepsilon_{H} (h)=\varepsilon_{H}(h)$.
\end{lemma}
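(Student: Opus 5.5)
The plan is a two-case argument on the degree of $h$, using only two facts recorded earlier: the counit $\varepsilon_H$ is a graded $\Bbbk$-linear map, so $\varepsilon_H(H_g)=\{0\}$ for $g\neq e$ (noted right after the definition of a colour coalgebra); and the bicharacter satisfies $(g/e)=1_{\Bbbk}$ for all $g\in G$ (noted right after the definition of a bicharacter). Throughout, $m\in M$ and $h\in H$ are homogeneous, and we write $g=\mid m\mid$ and $g'=\mid h\mid$.

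\emph{Case $g'\neq e$.} Here $h\in H_{g'}$ with $g'\neq e$. Since $\varepsilon_H$ is graded and $\Bbbk$ carries the trivial gradation, we get $\varepsilon_H(h)=0$. Both sides of the claimed identity are then $0$, because $(g/g')\cdot 0=0$. The identity holds trivially.

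\emph{Case $g'=e$.} Now $(\mid m\mid/\mid h\mid)=(g/e)$. By the elementary property of bicharacters, $(g/e)=1_{\Bbbk}$. Hence $(\mid m\mid/\mid h\mid)\varepsilon_H(h)=1_{\Bbbk}\,\varepsilon_H(h)=\varepsilon_H(h)$.

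The two cases exhaust all homogeneous $h$, so the identity holds in general. There is no real obstacle; the only point needing care is the justification of $(g/e)=1_{\Bbbk}$. From condition i) of the bicharacter definition, $(g/e)=(g/e\cdot e)=(g/e)^2$. Since $(g/e)$ lies in $\Bbbk^{\times}$, it is invertible, and cancelling one factor gives $(g/e)=1_{\Bbbk}$. The symmetric statement $(\mid h\mid/\mid m\mid)\varepsilon_H(h)=\varepsilon_H(h)$ follows in the same way from condition ii).
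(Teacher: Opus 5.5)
Your proof is correct and follows the paper's argument exactly: you split on whether $\mid h\mid = e$, use $\varepsilon_H(H_g)=\{0\}$ for $g\neq e$ in one case, and use $(g/e)=1_{\Bbbk}$ in the other. Your derivation of $(g/e)=1_{\Bbbk}$ from $(g/e)=(g/e)^2$ and invertibility supplies a step the paper only calls easy.
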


\begin{proof} If $\mid h\mid \neq e$, then $\varepsilon_{H}(h)=0_{\Bbbk}$, so $(\mid m\mid /\mid h\mid)\varepsilon_{H} (h)=0_{\Bbbk}=\varepsilon_{H} (h)$.\\ If $\mid h\mid = e$, then $(\mid m\mid /\mid h\mid)=(\mid m\mid /\mid e\mid)=1$, then $(\mid m\mid /\mid h\mid)\varepsilon_{H} (h)=\varepsilon_{H}(h)$. 
\end{proof}

\begin{definition} Let $H$ be a colour Hopf algebra. A graded left $H$-comodule $M$ is a graded $\Bbbk$-vector space together with a map $\chi_{M}: M\longrightarrow H\otimes M$ defined by $$\chi_M(m_{g})= \sum_{g' \in G}h_{g'^{-1}}\otimes m_{g'g}$$ satisfying the usual left comodule properties:\\
$(Id_{H}\otimes \chi_{M})\circ \chi_{M}=(\Delta_{H}\otimes Id_{M})\circ \chi_{M}$ and $(\varepsilon_{H} \otimes Id_{M})\circ \chi_{M}=Id_{M}$.
\end{definition}

In Sweedler's notation, we write $\chi_{M}(m)=m_{(-1)}\otimes m_{(0)}$ for all homogeneous element $m\in M$ with $\mid m_{(0)}\mid\mid m_{(-1)}\mid=\mid m \mid $, and the graded left $H$-comodule conditions on $M$ are $$m_{(-1)1}\otimes m_{(-1)2}\otimes m_{(0)}=m_{(-1)}\otimes m_{(0)(-1)}\otimes m_{(0)(0)}=m_{(-2)}\otimes m_{(-1)}\otimes m_{(0)},$$ and 
$$  \varepsilon_{H} (m_{(-1)})m_{(0)}=m, ~~\forall~~m\in M.$$\\
Let $H$ be a colour Hopf algebra. If $ M $ and $ N $ are graded left $H$-comodules, a graded $H$-colinear map from $ M $ to $ N $  is a graded $\Bbbk$-linear map which is also a left $H$-colinear map from $ M $ to $ N $, that is, a  graded $\Bbbk$-linear map such that $$f(m)_{(-1)}\otimes f(m)_{(0)}=m_{(-1)}\otimes f(m_{(0)})~~with ~~\chi_{M}(m)=m_{(-1)}\otimes m_{(0)}.$$

\bigskip 

A graded $H$-subcomodule of a graded left $H$-comodule $M=\bigoplus_{g\in G}M_{g}$ is a graded $\Bbbk$-vector subspace $N$ of $M$ which is an $H$-subcomodule of $M$. So for a graded $H$-subcomodule $N$ of $M$, we have $\chi(N_{g})\subseteq H_{g'^{-1}}\otimes N_{g'g}~~\forall~~g,g'\in G.$

Direct sums of graded left $H$-comodules are graded left $H$-comodules. We denote by $^{\textnormal{gr}-H}\mathcal{M}$ the category of graded left $H$-comodules with graded $H$-colinear maps, and by $^{\textnormal{gr}-H}Hom(M,N)$ the graded vector space of all graded $H$-colinear maps from $M$ to $N$. 

Let $M$ be a graded left $H$-comodule. The vector space of the coinvariant elements of $M$ is $$M^{coH}=\{m\in M~~ \hbox{such that} ~~ \chi(m)=1\otimes m \}.$$
Note that $M^{coH}$ is a graded vector subspace of $M$. From now a graded $H$-comodule (graded $A$-module) means a graded left $H$-comodule (graded left $A$-module).

\begin{lemma} \label{lemma2} Let $H$ be a colour Hopf algebra. If $M$ is a graded $H$-comodule, then $H\otimes M$ is a graded $H$-comodule with the coaction defined by: $\chi_{H\otimes M}(h\otimes m)=h_{1}\otimes h_{2}\otimes m$ for all $h\in H$ and $m\in M$.
\end{lemma}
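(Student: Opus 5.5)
We have to check three things for $\chi_{H\otimes M}(h\otimes m)=h_{1}\otimes h_{2}\otimes m$: that it is a well-defined graded $\Bbbk$-linear map $H\otimes M\to H\otimes(H\otimes M)$, that it is coassociative, and that it is counital. The map factors as
$$\chi_{H\otimes M}=\Delta_H\otimes Id_M,$$
followed by the canonical associativity identification $(H\otimes H)\otimes M\cong H\otimes(H\otimes M)$. It is therefore well defined and $\Bbbk$-linear. Note that the given comodule structure $\chi_M$ of $M$ plays no role; only the grading of $M$ enters. No twist $\tau$ appears anywhere, so no bicharacter scalars arise.

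\textbf{Gradedness.} Since $\Delta_H$ is graded by the definition of a colour coalgebra, for homogeneous $h$ and $m$ we get
$$|h_1|\,|h_2|\,|m|=|h|\,|m|=|h\otimes m|.$$
Hence $\chi_{H\otimes M}$ maps $(H\otimes M)_g$ into $(H\otimes(H\otimes M))_g$. In the Sweedler form of the paper's definition, this says $(h\otimes m)_{(-1)}=h_1$ and $(h\otimes m)_{(0)}=h_2\otimes m$, with $|h_2\otimes m|\,|h_1|=|h\otimes m|$, as required.

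\textbf{Coassociativity.} We must show
$$(Id_H\otimes\chi_{H\otimes M})\circ\chi_{H\otimes M}=(\Delta_H\otimes Id_{H\otimes M})\circ\chi_{H\otimes M}.$$
The left side sends $h\otimes m$ to $h_1\otimes h_{21}\otimes h_{22}\otimes m$. The right side sends it to $h_{11}\otimes h_{12}\otimes h_2\otimes m$. These agree by coassociativity of $\Delta_H$, tensored with $Id_M$.

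\textbf{Counit.} We compute
$$(\varepsilon_H\otimes Id_{H\otimes M})\chi_{H\otimes M}(h\otimes m)=\varepsilon_H(h_1)h_2\otimes m=h\otimes m,$$
by the counit axiom of $H$. If one writes the identification with twist factors, Lemma~\ref{lemma1} removes any scalar $(\,\cdot/\cdot\,)$ multiplying $\varepsilon_H$.

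The only point needing any care is the degree bookkeeping that $\chi_{H\otimes M}$ has the form $\chi(x_g)=\sum h_{g'^{-1}}\otimes x_{g'g}$. This follows directly from $\Delta_H(H_g)\subseteq\bigoplus_{k}H_{gk^{-1}}\otimes H_k$. So this lemma presents no real obstacle.
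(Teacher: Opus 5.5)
Your proof is correct: $\chi_{H\otimes M}=\Delta_H\otimes Id_M$ is graded because $\Delta_H$ is, and coassociativity and counitality follow directly from those of $\Delta_H$, with no bicharacter factors ever appearing (so your aside invoking Lemma~\ref{lemma1} is unnecessary). The paper gives no proof of this lemma; it later calls the same statement, Lemma~\ref{lemma4}(1), ``well known'', and your direct check is the standard argument it takes for granted.
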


\section{The main results} 
We will establish the main results of the paper. 

\subsection{Injectivity of colour $(A,H)$-Hopf modules}
We begin with the definition of a graded $H$-comodule colour algebra. 

\begin{definition} Let $H$ be a colour Hopf algebra. A graded $H$-comodule colour algebra is a colour algebra $A$ which is a graded left $H$-comodule such that  $$\chi_{A}(aa') =(\mid a_{(0)}\mid /\mid a'_{(-1)} \mid) (a_{(-1)}a'_{(-1)})\otimes (a_{(0)}a'_{(0)}) $$~~and~~ $$\chi_{A}(1_{A})=1_{H}\otimes 1_{A},$$ that is, the multiplication and the unit map are graded $H$-colinear, where the left $H$-coaction on $A \otimes A$ is given by $$(a \otimes a')_{(-1)} \otimes (a \otimes a')_{(0)}=(\mid a_{(0)} \mid /\mid a'_{(-1)} \mid)a_{(-1)}a'_{(-1)} \otimes a_{(0)} \otimes a'_{(0)}$$.
\end{definition}

We can see that $H$ is a graded $H$-comodule colour algebra with $\chi_{H} (h)=\Delta_{H} (h)$, and we have $H^{coH}=1_{H}\Bbbk$.

From now on $H$ is a colour Hopf algebra and $A$ is a graded $H$-comodule colour algebra.

\begin{lemma} \label{lemma3} $A^{coH}$ is a colour subalgebra of $A$ called the colour subalgebra of coinvariants of $A$.
\end{lemma}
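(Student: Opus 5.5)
To prove Lemma \ref{lemma3}, I will check the three defining properties of a colour subalgebra for $B=A^{coH}$: it is a graded vector subspace of $A$, it contains $1_A$, and it is closed under multiplication.

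The grading is already noted in the text, since $M^{coH}$ is a graded subspace for any graded $H$-comodule $M$. If desired, this can be checked directly. Write $a=\sum_g a_g$ with $\chi_A(a)=1_H\otimes a$. The coaction $\chi_A$ is a graded map, and $1_H\in H_e$. Hence the component of $\chi_A(a)$ in $(H\otimes A)_g$ is $\chi_A(a_g)$, and this must equal the degree-$g$ part of $1_H\otimes a$, namely $1_H\otimes a_g$. So each $a_g$ lies in $B$.

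The unit is immediate from the axiom $\chi_A(1_A)=1_H\otimes 1_A$, so $1_A\in B$. Linearity of $\chi_A$ shows that $B$ is a $\Bbbk$-subspace.

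Closure under multiplication is the only step with any content. It suffices to treat homogeneous $a,a'\in B$, because $B$ is spanned by its homogeneous elements. For these we have $a_{(-1)}\otimes a_{(0)}=1_H\otimes a$, and likewise for $a'$. The compatibility condition defining a graded $H$-comodule colour algebra then gives
$$\chi_A(aa')=(\mid a\mid/\mid 1_H\mid)\,(1_H1_H)\otimes(aa').$$
The only subtle point is the sign factor. Since $\mid 1_H\mid=e$, the bicharacter property $(g/e)=1$ makes this factor equal to $1$. Alternatively, Lemma \ref{lemma1} can be applied, using $\varepsilon_H(1_H)=1$. Therefore $\chi_A(aa')=1_H\otimes aa'$, and so $aa'\in B$.

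Associativity and the unit are inherited from $A$. Hence $B$ is a colour subalgebra of $A$. I expect no real obstacle beyond keeping track of the bicharacter factor.
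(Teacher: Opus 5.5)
Your proof is correct and follows the paper's argument. The paper likewise substitutes $a_{(-1)}\otimes a_{(0)}=1_H\otimes a$ and $a'_{(-1)}\otimes a'_{(0)}=1_H\otimes a'$ into the compatibility condition, uses $(\mid a\mid/\mid 1_H\mid)=(\mid a\mid/e)=1$, and notes that $1_A\in A^{coH}$. Your explicit verification that $A^{coH}$ is a graded subspace is a small addition; the paper only asserts this in passing.
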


\begin{proof} Let $a,a'\in A^{coH}$. Then we have $ a_{(-1)}\otimes a_{(0)}=1_{H}\otimes a~~and~~a'_{(-1)}\otimes a'_{(0)} =1_{H}\otimes a'$. Therefore we have
\begin{eqnarray*}
\chi_{A}(aa') &=&  (\mid a_{(0)}\mid /\mid a'_{(-1)} \mid) (a_{(-1)}a'_{(-1)})\otimes (a_{(0)}a'_{(0)})\\
&=& (\mid a \mid /\mid 1_H \mid) 1_{H}\otimes (aa')\\
&=&  (\mid a \mid /\mid e \mid) 1_{H}\otimes (aa') = 1_{H}\otimes (aa')
\end{eqnarray*}
It is clear that $1_{A} \in A^{coH}$.
\end{proof}

\begin{definition} We say that $M$ is a colour $(A,H)$-Hopf module if $M$ is a graded $A$-module and a graded $H$-comodule such that:
$$\chi_{M} (am)=(\mid a_{(0)} \mid /\mid m_{(-1)} \mid)(a_{(-1)}m_{(-1)})\otimes (a_{(0)}m_{(0)}) ~~\forall ~~a\in A,~~m\in M.$$
\end{definition}

The graded $H$-comodule colour algebra $A$ itself is a colour $(A,H)$-Hopf module.
We denote by $_{\textnormal{gr}-A}^{\textnormal{gr}-H}\mathcal{M}$ the category of colour $(A,H)$-Hopf modules: its morphisms are the graded $A$-linear graded $ H $-colinear maps. A graded $A$-linear graded $H$-colinear map will be called a morphism of colour $(A,H)$-Hopf modules. It is easy to see that $_{\textnormal{gr}-A}^{\textnormal{gr}-H}\mathcal{M}$ is an abelian category. We denote by $_{\textnormal{gr}-A}^{\textnormal{gr}-H}Hom(M,N)$ the graded vector space of all morphisms of colour $(A,H)$-Hopf modules from $M$ to $N$. 

\begin{definition} Let $M$ be a graded $H$-comodule. A graded $H$-comodule $I$ is said to be an injective object in $^{\textnormal{gr}-H}\mathcal{M}$ or a gr-injective graded $H$-comodule if for all injective graded $ H$-colinear map $i: N\longrightarrow P$ and for all graded $ H $-colinear map $ f:N\longrightarrow I$, there is a graded $ H $-colinear map $\overline{f} :P \longrightarrow I$ such that, $\overline{f} \circ i=f$ ($N$ is any left graded $H$-comodule). 
\end{definition}

\begin{lemma} \label{lemma4} 
\begin{enumerate}
\item Let $N$ be a graded vector space. Then $H \otimes N$ is a graded $H$-comodule : the $H$-coaction is $\Delta_{H} \otimes id_N$. In particular, $(1_H \Bbbk) \otimes N$ is a graded $H$-subcomodule of $H \otimes N$.
	
\item If furthermore, $N$ is a graded $A$-module, then $H \otimes N$ is a colour $(A,H)$-Hopf module: the $H$-coaction is $\Delta_{H} \otimes id_N$ and the $A$-action is given by
$$a(h \otimes n)=(\mid a_{(0)} \mid / \mid h \mid)(a_{(-1)}h) \otimes (a_{(0)}n), \quad a \in A, n \in N, h \in H.$$
\end{enumerate}
\end{lemma}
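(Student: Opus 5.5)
Both parts reduce to direct verification on homogeneous elements, using only the colour-coalgebra axioms, the properties of the bicharacter, and the compatibility conditions defining a graded $H$-comodule colour algebra.

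\emph{Part 1.} We must check three things.
First, $\Delta_H\otimes id_N$ is graded. For $h\otimes n$ with $\Delta_H(h)=h_1\otimes h_2$, we have $|h_1||h_2||n|=|h||n|$, since $\Delta_H$ is graded. The degree convention of a left coaction requires $|(h\otimes n)_{(-1)}|\,|(h\otimes n)_{(0)}|=|h\otimes n|$, which is exactly this.
Second, coassociativity and counitality follow immediately from those of $\Delta_H$ and $\varepsilon_H$, tensored with $id_N$. This is the same argument as Lemma \ref{lemma2}, with the trivial coaction on $N$ replaced by the identity.
Third, $(1_H\Bbbk)\otimes N$ is a graded subcomodule. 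It is a graded subspace because $1_H\in H_e$, and $\Delta_H(1_H)=1_H\otimes 1_H$ gives $\chi(1_H\otimes n)=1_H\otimes(1_H\otimes n)\in H\otimes((1_H\Bbbk)\otimes N)$.

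\emph{Part 2.} First, the formula is graded: $|a_{(-1)}||h||a_{(0)}||n|=|a||h||n|$.
Next, the module axioms.
\begin{itemize}
\item The unit axiom follows from $\chi_A(1_A)=1_H\otimes 1_A$ and $(e/|h|)=1$.
\item For associativity, expand $a(a'(h\otimes n))$ and compare it with $(aa')(h\otimes n)$. For the latter we use the multiplicativity formula for $\chi_A(aa')$. Both sides become scalar multiples of $a_{(-1)}a'_{(-1)}h\otimes a_{(0)}a'_{(0)}n$. The scalars agree because $|a'_{(-1)}h|=|a'_{(-1)}||h|$ and $(|a_{(0)}|/\cdot)$ is a character:
\[
(|a_{(0)}|/|a'_{(-1)}||h|)(|a'_{(0)}|/|h|)
\]
on one side versus
\[
(|a_{(0)}|/|a'_{(-1)}|)(|a_{(0)}a'_{(0)}|/|h|)
\]
on the other, and these are equal by bicharacter properties i) and ii).
\end{itemize}

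\emph{The Hopf compatibility (main step).} We must show
\[
\chi(a(h\otimes n))=(|a_{(0)}|/|h_1|)\,a_{(-1)}h_1\otimes a_{(0)}(h_2\otimes n).
\]
On the left, apply $\Delta_H$ to $a_{(-1)}h$ using the colour bialgebra rule. This gives
\[
(|a_{(0)}|/|h|)\,(|a_{(-1)2}|/|h_1|)\,a_{(-1)1}h_1\otimes a_{(-1)2}h_2\otimes a_{(0)}n .
\]
On the right, expand $a_{(0)}(h_2\otimes n)$ with the module formula, producing a factor $(|a_{(0)(0)}|/|h_2|)$. Then rename via coassociativity, $a_{(-1)1}\otimes a_{(-1)2}\otimes a_{(0)}=a_{(-1)}\otimes a_{(0)(-1)}\otimes a_{(0)(0)}$.

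The main obstacle is matching the scalars. The right side carries
\[
(|a_{(0)}|/|h_1|)\,(|a_{(0)(0)}|/|h_2|)
\]
in the renamed indices, while the left side carries
\[
(|a_{(0)(0)}|/|h|)\,(|a_{(0)(-1)}|/|h_1|).
\]
I plan to use $|h|=|h_1||h_2|$ and $|a_{(0)}|=|a_{(0)(-1)}||a_{(0)(0)}|$ and split each term by multiplicativity in each argument. Both sides then reduce to
\[
(|a_{(0)(-1)}|/|h_1|)\,(|a_{(0)(0)}|/|h_1|)\,(|a_{(0)(0)}|/|h_2|).
\]
This identity is valid summand by summand, since all Sweedler components are homogeneous. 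That completes the verification.
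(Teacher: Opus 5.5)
Your proposal is correct and follows essentially the same route as the paper: a direct check of module associativity and of the Hopf compatibility, expanding $\Delta_H(a_{(-1)}h)$ by the colour bialgebra rule, renaming via coassociativity, and matching the scalars using the bicharacter identities together with $|a_{(0)}|=|a_{(0)(-1)}||a_{(0)(0)}|$. Beyond the paper, you also verify gradedness, the unit axiom, and Part 1, which the paper leaves as routine or ``well known''.
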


\begin{proof} 1. is well known.

2. We have
$$\begin{array}{l} (aa')(h \otimes n) \\
= (\mid (aa')_{(0)} \mid / \mid h \mid)((aa')_{(-1)}h) \otimes ((aa')_{(0)} n)\\
= (\mid a_{(0)} \mid \mid a'_{(0)} \mid / \mid h \mid)(\mid a_{(0)} \mid / \mid a'_{(-1)}\mid )(a_{(-1)}a'_{(-1)}h) \otimes (a_{(0)}a'_{(0)} n)\\
= (\mid a_{(0)} \mid / \mid h \mid)(\mid a'_{(0)} \mid / \mid h \mid)(\mid a_{(0)} \mid / \mid a'_{(-1)}\mid )(a_{(-1)}a'_{(-1)}h) \otimes (a_{(0)}a'_{(0)} n)\\
= (\mid a'_{(0)} \mid / \mid h \mid)(\mid a_{(0)} \mid / \mid a'_{(-1)}\mid )(\mid a_{(0)} \mid / \mid h \mid)(a_{(-1)}(a'_{(-1)}h)) \otimes (a_{(0)}(a_{(0)} n))\\
=(\mid a'_{(0)} \mid /\mid h\mid)[(\mid a_{(0)}\mid / \mid (a'_{(-1)}h) \mid)[a_{(-1)}(a'_{(-1)}h)] \otimes [a_{(0)}(a'_{(0)}n)] \\
= (\mid a'_{(0)} \mid /\mid h\mid)a[(a'_{(-1)}h) \otimes (a'_{(0)}n)] \\	
= a[a'(h\otimes n)],\end{array}$$
and $H \otimes N$ is a graded $A$-module. We have 

$$\begin{array}{l}(a(h\otimes n))_{(-1)} \otimes  (a(h \otimes n))_{(0)} \\
=(\mid a_{(0)} \mid / \mid h \mid)[(a_{(-1)}h) \otimes (a_{(0)}n)]_{(-1)} \otimes [(a_{(-1)}h) \otimes (a_{(0)}n)]_{(0)}\\
=(\mid a_{(0)} \mid / \mid h \mid) [(a_{(-1)}h)_{1} \otimes (a_{(-1)}h)_{2} \otimes (a_{(0)}n)] \\
=(\mid a_{(0)} \mid / \mid h_1 \mid \mid h_2 \mid ) (\mid a_{(-1)2} \mid / \mid h_1 \mid )[(a_{(-1)1}h_{1})\otimes (a_{(-1)2}h_{2}) \otimes (a_{(0)}n)]\\
=(\mid a_{(0)} \mid / \mid h_1\mid \mid h_2 \mid) (\mid a_{(-1)} \mid / \mid h_1 \mid )[(a_{(-2)}h_{1}) \otimes (a_{(-1)}h_{2}) \otimes(a_{(0)}n)]\\ 
=(\mid a_{(0)} \mid / \mid h_1 \mid)(\mid a_{(0)}\mid / \mid h_2 \mid) (\mid a_{(-1)} \mid / \mid h_1 \mid )[(a_{(-2)}h_{1})\otimes (a_{(-1)}h_{2}) \otimes (a_{(0)}n)]\\ 
=(\mid a_{(0)} \mid \mid a_{(-1)} \mid  / \mid h_1\mid )(\mid a_{(0)} \mid / \mid h_2\mid )(a_{(-2)}h_1) \otimes (a_{(-1)}h_2) \otimes (a_{(0)}n) \\
= (\mid a_{(0)(0)} \mid \mid a_{(0)(-1)} \mid  / \mid h_1\mid )(\mid a_{(0)(0)} \mid / \mid h_2\mid )(a_{(-1)}h_1) \otimes (a_{(0)(-1)}h_2) \otimes (a_{(0)(0)}n) \\
= (\mid a_{(0)} \mid / \mid h_1\mid )(a_{(-1)}h_1) \otimes (a_{(0)}(h_2 \otimes n))\\
=(\mid a_{(0)} \mid / \mid (h \otimes n)_{(-1)} \mid )(a_{(-1)}(h \otimes n)_{(-1)}) \otimes (a_{(0)}(h \otimes n)_{(0)}) \end{array}$$
Thus $H \otimes N$ is a colour $(A,H)$-Hopf module.
\end{proof}

From Lemma $\ref{lemma4}$, $H \otimes A$  is a colour $(A,H)$-Hopf module : the $H$-coaction is $\Delta_H \otimes id_A $, and the $A$-action is given by
$$a(h \otimes a')=(\mid a_{(0)}\mid / \mid h \mid)(a_{(-1)}h) \otimes (a_{(0)}a'), \quad a,a' \in A, h \in H.$$
In particular, $H \otimes A$ is a graded $A^{coH}$-module with $(1_H \Bbbk) \otimes A$ as a graded $A^{coH}$-submodule. Note that $H \otimes A$ is also a graded $H$-comodule colour algebra.
 
\begin{lemma} \label{lemma5} 
\begin{enumerate}
\item Let $M$ be a graded $H$-comodule and $N$ a graded vector space. Then there is a graded $\Bbbk$-linear isomorphism
$$ \gamma : {}^{\textnormal{gr}-H}Hom(M, H \otimes N) \rightarrow {}_{gr-{\Bbbk}}Hom(M,N)$$ for all $M \in {}^{\textnormal{gr}-H}{\mathcal M}$ and 
$N \in {}_{gr-{\Bbbk}}{\mathcal M}$. This map $\gamma$ is defined by $\gamma(f)=(\varepsilon_H \otimes id_N) \circ f$. The inverse $\gamma'$ of $\gamma$ is given by $\gamma'(g)=(id_H \otimes g) \circ \rho_M$. 
	
\item Let $M$ be a colour $(A,H)$-Hopf module and $N$ a graded $A$-module. Then there is a graded $\Bbbk$-linear isomorphism
$$ \gamma : {}_{\textnormal{gr}-A}^{\textnormal{gr}-H}Hom(M, H \otimes N) \rightarrow {}_{\textnormal{gr}-A}Hom(M,N)$$ for all $M \in {}_{\textnormal{gr}-A}^{\textnormal{gr}-H}{\mathcal M}$ and $N \in {}_{\textnormal{gr}-A}{\mathcal M}$. This map $\gamma$ and its inverse $\gamma'$ are defined as in 1.
\end{enumerate}
\end{lemma}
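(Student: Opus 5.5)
We prove part 1 first and then obtain part 2 by checking that $\gamma$ and $\gamma'$ respect the extra $A$-linear structure. Throughout, $\rho_M$ denotes the coaction $\chi_M$, and we write $\rho_M(m)=m_{(-1)}\otimes m_{(0)}$.

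\emph{Part 1.} We check three things.

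First, both maps land in the right spaces. If $f$ is graded $H$-colinear, then $\gamma(f)=(\varepsilon_H\otimes id_N)\circ f$ is graded, because $\varepsilon_H$ is graded and $H\otimes N$ carries the tensor grading. Conversely, for a graded linear $g:M\to N$, the map $\gamma'(g)=(id_H\otimes g)\circ\rho_M$ is graded, since $\rho_M$ is graded. It is $H$-colinear because
$$(\Delta_H\otimes id_N)(id_H\otimes g)\rho_M(m)=m_{(-2)}\otimes m_{(-1)}\otimes g(m_{(0)})=m_{(-1)}\otimes \gamma'(g)(m_{(0)}),$$
where the second equality is coassociativity of $\rho_M$.

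Second, $\gamma\circ\gamma'=id$. We compute
$$\gamma\gamma'(g)(m)=\varepsilon_H(m_{(-1)})g(m_{(0)})=g(m),$$
using the counit property of $M$.

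Third, $\gamma'\circ\gamma=id$. Write $f(m)=\sum h_i\otimes n_i$. Colinearity of $f$ says
$$\sum h_{i1}\otimes h_{i2}\otimes n_i=m_{(-1)}\otimes f(m_{(0)}).$$
Applying $id_H\otimes\varepsilon_H\otimes id_N$ to both sides gives $f(m)=m_{(-1)}\otimes\gamma(f)(m_{(0)})$, which is exactly $\gamma'\gamma(f)(m)$.

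No bicharacter signs occur in part 1, since every map involved is $\varepsilon$, $\Delta$ or $\rho$, applied in order without any twist.

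\emph{Part 2.} By part 1 it suffices to show two things, after which the restricted maps are still mutually inverse:
\begin{itemize}
\item $\gamma(f)$ is $A$-linear whenever $f$ is a morphism of colour $(A,H)$-Hopf modules;
\item $\gamma'(g)$ is $A$-linear whenever $g$ is graded $A$-linear.
\end{itemize}

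For the first, use $A$-linearity of $f$ together with the action of Lemma \ref{lemma4}. With $f(m)=\sum h_i\otimes n_i$,
$$\gamma(f)(am)=(\varepsilon_H\otimes id)\bigl(af(m)\bigr)=\sum(\mid a_{(0)}\mid/\mid h_i\mid)\,\varepsilon_H(a_{(-1)}h_i)\,a_{(0)}n_i .$$
Multiplicativity of $\varepsilon_H$ turns the scalar into $\varepsilon_H(a_{(-1)})\varepsilon_H(h_i)$. Lemma \ref{lemma1} lets us drop the bicharacter factor in the presence of $\varepsilon_H(h_i)$, and the counit property of $A$ then gives $a\,\gamma(f)(m)$.

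For the second, use the colour Hopf module compatibility of $M$:
$$\gamma'(g)(am)=(\mid a_{(0)}\mid/\mid m_{(-1)}\mid)\,a_{(-1)}m_{(-1)}\otimes g(a_{(0)}m_{(0)}).$$
Since $g$ is $A$-linear, $g(a_{(0)}m_{(0)})=a_{(0)}g(m_{(0)})$. The right-hand side then equals the Lemma \ref{lemma4} action of $a$ on $m_{(-1)}\otimes g(m_{(0)})$, provided the signs agree. They do, because $g$ is graded, so $\mid g(m_{(0)})\mid=\mid m_{(0)}\mid$ and the factor $(\mid a_{(0)}\mid/\mid m_{(-1)}\mid)$ is exactly the one the action requires.

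The only delicate point is this bookkeeping of bicharacter factors in part 2, namely removing the sign via Lemma \ref{lemma1} and matching the degree used in the action formula. Everything else is routine Sweedler manipulation.
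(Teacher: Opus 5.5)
Your proof is correct and follows the paper's argument for part 1 essentially step for step. For part 2 you actually carry out the $A$-linearity checks that the paper only calls easy, and they are right. One small correction: in the check for $\gamma'(g)$, the bicharacter factor in the Lemma \ref{lemma4} action depends only on $\mid m_{(-1)}\mid$, the $H$-leg. So the factors match directly, and the gradedness of $g$ plays no role in that step.
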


\begin{proof} 1. By Lemma $\ref{lemma4}$, $H \otimes N$ is a graded $H$-comodule. Choose $f \in {}^{\textnormal{gr}-H}Hom(M, H \otimes N)$ and $m \in M$. Set $f(m)=\sum_{i \in I}(h_i \otimes n_i)$ for some $n_i \in N$ and $h_i \in H$, where $I$ a family set of indexes. Then $\gamma(f)(m)=\sum_{i \in I}(\varepsilon_H(h_i)n_i)$. For $a \in A$, let $g \in {}_{gr-k}Hom(M, N)$. We have
\begin{eqnarray*} \gamma'(g)(m)_{(-1)} \otimes \gamma'(g)(m)_{(0)} &=& (m_{(-1)} \otimes g(m_{(0)}))_{(-1)} \otimes (m_{(-1)} \otimes g(m_{(0)}))_{(0)}\\
&=& m_{(-1)1} \otimes m_{(-1)2} \otimes g(m_{(0)}) \\
&=& m_{(-1)} \otimes m_{(0)(-1)} \otimes g(m_{(0)(0)}) \\
&=&m_{(-1)} \otimes \gamma'(g)(m_{(0)}) \\.\end{eqnarray*}
So $\gamma'(g)$ is $H$-colinear. We have
\begin{eqnarray*} [(\gamma \circ \gamma')(g)](m)&=&  [\gamma(\gamma'(g))](m) \\
&=& (\varepsilon_H \otimes id_N)[\gamma'(g)(m)]\\
&=& (\varepsilon_H \otimes id_N)[(id_H \otimes g)(m_{(-1)} \otimes m_{(0)})]\\
&=& (\varepsilon_H \otimes id_N)[m_{(-1)} \otimes g(m_{(0)})]\\
&=& \varepsilon_H(m_{(-1)})g(m_{(0)})=g(m).\end{eqnarray*} So $\gamma \circ \gamma'=id_{_{gr-k}Hom(M,N)}$. We remark that
$$f(m)_{(-1)} \otimes f(m)_{(0)}=\sum_{i \in I}( h_{i1} \otimes h_{i2} \otimes n_{i}).$$ Applying this formula, we obtain
$$ \begin{array}{rcl}[(\gamma' \circ \gamma)(f)](m) &=& [\gamma'(\gamma(f))](m)\\
&=& [id_H \otimes \gamma(f)](m_{(-1)} \otimes m_{(0)}) \\
&=& [m_{(-1)} \otimes \gamma(f)(m_{(0)})]\\
&=& [m_{(-1)} \otimes (\varepsilon_H \otimes id_N)(f(m_{(0)}))]\\
&=& f(m)_{(-1)} \otimes [(\varepsilon_H \otimes id_N)(f(m)_{(0)})]\\
&=& [\sum_{i \in I}(h_i \otimes n_i)]_{(-1)} \otimes (\varepsilon_H \otimes id_N)[\sum_{i \in I}(h_i \otimes n_i)]_{(0)})\\
&=& \sum_{i \in I}h_{i1} \otimes (\varepsilon_H \otimes id_N)(h_{i2} \otimes n_i)\\
&=& \sum_{i \in I}h_{i1} \otimes (\varepsilon_H(h_{i2})n_i)\\
&=& \sum_{i \in I}[(h_{i1}\varepsilon_H(h_{i2})) \otimes n_i)]\\
&=& \sum_{i \in I} (h_{i} \otimes n_{i}) =f(m).\end{array}$$ So $\gamma' \circ \gamma=id_{^{\textnormal{gr}-H}Hom(M,H \otimes N)}$. The proof of (1) is finished.

2. By Lemma $\ref{lemma4}$, $H \otimes N$ is a colour $(A,H)$-Hopf module. One easily check that $\gamma(f)$ and $\gamma'(g)$ are graded $A$-linear maps. 
\end{proof} 

\begin{corollary} \label{corollary1} If $N$ is a graded vector space (resp., an injective graded $A$-module), then $H \otimes N$ is a gr-injective graded $H$-comodule (resp., an injective colour $(A,H)$-Hopf module).
\end{corollary}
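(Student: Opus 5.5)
The plan is to read Lemma \ref{lemma5} as a natural adjunction: the forgetful functor from graded $H$-comodules to graded vector spaces (resp. from colour $(A,H)$-Hopf modules to graded $A$-modules) is left adjoint to $H\otimes -$. Injectivity of $H\otimes N$ will then come from the familiar principle that a right adjoint of an exact functor preserves injectives. The argument is direct, and I will run it in parallel for the two cases.

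Let $i:M'\to M$ be an injective morphism in ${}^{\textnormal{gr}-H}\mathcal{M}$, resp. in ${}_{\textnormal{gr}-A}^{\textnormal{gr}-H}\mathcal{M}$, and let $f:M'\to H\otimes N$ be a morphism in the same category. First apply $\gamma$ from Lemma \ref{lemma5} to get $\gamma(f)=(\varepsilon_H\otimes id_N)\circ f:M'\to N$. This is a graded $\Bbbk$-linear map in the first case and a graded $A$-linear map in the second. Since $i$ is injective as a map, it is a monomorphism of graded vector spaces, resp. of graded $A$-modules.

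Next, extend $\gamma(f)$ to a map $g:M\to N$ with $g\circ i=\gamma(f)$. In the vector space case, $N$ is injective in the category of graded vector spaces. To see this, take homogeneous complements degreewise: $M=i(M')\oplus C$ as graded spaces, and let $g$ vanish on $C$. In the module case, the extension is exactly the hypothesis that $N$ is injective as a graded $A$-module; I read ``injective graded $A$-module'' as injective in ${}_{\textnormal{gr}-A}\mathcal{M}$.

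Then set $\overline f=\gamma'(g)=(id_H\otimes g)\circ\chi_M$. Lemma \ref{lemma5} shows $\overline f$ is a morphism in the relevant category. It remains to check $\overline f\circ i=f$, which is naturality of $\gamma'$ in $M$. Since $i$ is colinear,
$$\overline f(i(m))=i(m)_{(-1)}\otimes g(i(m)_{(0)})=m_{(-1)}\otimes g(i(m_{(0)}))=m_{(-1)}\otimes\gamma(f)(m_{(0)})=\gamma'(\gamma(f))(m)=f(m),$$
using $\gamma'\circ\gamma=id$.

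The main obstacle is this naturality step, together with confirming that $\gamma'(g)$ is $A$-linear in part 2. The paper only asserts that check, so I would verify it: $\gamma'(g)(am)=(\mid a_{(0)}\mid/\mid m_{(-1)}\mid)\,a_{(-1)}m_{(-1)}\otimes a_{(0)}g(m_{(0)})$, and this matches the $A$-action on $H\otimes N$ from Lemma \ref{lemma4}.
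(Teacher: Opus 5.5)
Your proposal is correct and follows the argument the paper intends: the corollary is stated right after Lemma \ref{lemma5} with no separate proof, because the natural isomorphism $\gamma$ makes $H\otimes -$ right adjoint to an exact forgetful functor. You have unwound that adjunction by hand, including the naturality check $\overline f\circ i=f$ and the $A$-linearity of $\gamma'(g)$, which the paper leaves to the reader. Your reading of ``injective graded $A$-module'' as injective in ${}_{\textnormal{gr}-A}\mathcal{M}$ is also harmless, since injectivity in ${}_{A}\mathcal{M}$ of a graded module already implies injectivity in ${}_{\textnormal{gr}-A}\mathcal{M}$.
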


If $H$ is colour-commutative and the bicharacter is symmetric, then we have $${\mathcal S}_H(hh')={\mathcal S}_H(h){\mathcal S}_H(h')\quad \forall h, h' \in H.$$ 

The following formula will be useful for Theorem $\ref{theorem1}$, Lemma $\ref{lemma7}$ and Proposition $\ref{proposition4}$. 

$$m_{(-1)1} \otimes m_{(-1)2} \otimes m_{(0)(-1)} \otimes m_{(0)(0)}= m_{(-2)} \otimes m_{(-1)1} \otimes m_{(-1)2} \otimes m_{(0)}.$$

The following theorem is a generalization of \cite[Theorem 1]{Doi}.

\begin{theorem} \label{theorem1} 
	
Assume that there is a graded $H$-colinear map $\theta: H \rightarrow A$ such that $\theta(1_H)=1_A$.
\begin{enumerate} 
\item Then every colour $(A,H)$-Hopf module is a \textnormal{gr}-injective graded $H$-comodule.
	
\item If $A$ and $H$ are colour-commutative and the bicharacter is symmetric, then every colour $(A,H)$-Hopf module which is injective as a graded $A$-module is injective as a colour $(A,H)$-Hopf module.
\end{enumerate}
\end{theorem}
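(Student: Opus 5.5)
The plan is to follow Doi's argument: realise every colour $(A,H)$-Hopf module $M$ as a direct summand of $H\otimes M$ in the appropriate category. Injectivity then follows from Corollary \ref{corollary1}, since direct summands of injective objects are injective.

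For part 1, take $\rho_M=\chi_M: M\rightarrow H\otimes M$, where $H\otimes M$ carries the coaction $\Delta_H\otimes id_M$. By coassociativity, i.e. the case $g=id_M$ of Lemma \ref{lemma5}(1), $\rho_M$ is graded $H$-colinear. By Corollary \ref{corollary1}, $H\otimes M$ is gr-injective, so it suffices to construct a graded $H$-colinear retraction $\lambda: H\otimes M\rightarrow M$ with $\lambda\circ\rho_M=id_M$. Following Doi, I would try
$$\lambda(h\otimes m)=c\,\theta\big(h\,{\mathcal S}_H(m_{(-1)})\big)m_{(0)},$$
where $c$ is a scalar built from the bicharacter, namely a power of $(\mid h\mid/\mid m_{(-1)}\mid)$ or $(\mid m_{(0)}\mid/\mid m_{(-1)}\mid)$. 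The scalar is chosen so that colinearity comes out right; the correct choice will be fixed during the colinearity computation.

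The retraction property is easy. We have $\lambda(m_{(-1)}\otimes m_{(0)})=\theta(m_{(-2)}{\mathcal S}_H(m_{(-1)}))m_{(0)}$ up to signs, which equals $\varepsilon_H(m_{(-1)})\theta(1_H)m_{(0)}=m$ by the antipode axiom, $\theta(1_H)=1_A$, and Lemma \ref{lemma1} to kill the bicharacter factors multiplied by $\varepsilon_H$. Gradedness of $\lambda$ is clear, because $\theta$, ${\mathcal S}_H$ and the action all preserve degree.

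The main obstacle is the $H$-colinearity of $\lambda$. One has to compute $\chi_M(\lambda(h\otimes m))$ by combining several identities: the Hopf module compatibility, the colinearity of $\theta$ (so $\chi_A(\theta(x))=x_1\otimes\theta(x_2)$), the twisted multiplicativity of $\Delta_H$, and the formula $\Delta_H({\mathcal S}_H(x))=(\mid x_1\mid/\mid x_2\mid){\mathcal S}_H(x_2)\otimes{\mathcal S}_H(x_1)$. After applying the four-fold coassociativity formula stated just before the theorem, the terms $m_{(-2)}{\mathcal S}_H(m_{(-1)})$ appearing in the $H$-leg collapse via the antipode. What remains is $h_1\otimes\lambda(h_2\otimes m)$, as required. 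The real work is bookkeeping the bicharacter factors, which must cancel using properties i), ii) and Lemma \ref{lemma1}; this cancellation is what fixes the scalar $c$.

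For part 2, suppose $M$ is injective as a graded $A$-module. Then $H\otimes M$ is an injective colour $(A,H)$-Hopf module by Corollary \ref{corollary1}, and $\rho_M$ is a morphism of colour $(A,H)$-Hopf modules, since the compatibility condition is exactly the $A$-linearity of $\chi_M$ for the action of Lemma \ref{lemma4}(2). It remains to show that $\lambda$ is also $A$-linear; then $M$ is a direct summand of $H\otimes M$ in ${}_{\textnormal{gr}-A}^{\textnormal{gr}-H}\mathcal{M}$ and hence injective there. To check $\lambda(a(h\otimes m))=a\lambda(h\otimes m)$, expand ${\mathcal S}_H(a_{(-1)}m_{(-1)})$. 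Colour-commutativity of $H$ together with symmetry of the bicharacter makes ${\mathcal S}_H$ multiplicative (as noted before the theorem) and lets us reorder factors, so $a_{(-2)}h{\mathcal S}_H(m_{(-1)}){\mathcal S}_H(a_{(-1)})$ can be rearranged and the pair $a_{(-2)}{\mathcal S}_H(a_{(-1)})$ collapses by the antipode. Colour-commutativity of $A$ then lets $a$ be moved past $\theta(\cdots)$ with a bicharacter factor. The difficulty is that $\theta$ is not multiplicative, so the reordering has to be done inside $H$ before applying $\theta$. Getting the factors to cancel is where symmetry is needed; I expect this reordering argument to be the delicate step of part 2.
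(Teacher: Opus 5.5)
Your proposal is correct and is essentially the paper's own proof. It uses the same retraction $\lambda$ of $\chi_M$, the same direct-summand argument inside $H\otimes M$ via Corollary \ref{corollary1}, and, in part 2, the same use of multiplicativity of ${\mathcal S}_H$, the antipode collapse of $a_{(-2)}{\mathcal S}_H(a_{(-1)})$, and colour-commutativity of $A$ with symmetry to move $a$ past $\theta(\cdots)$. As for your undetermined scalar, no correction is needed: $c=1$, i.e.\ $\lambda(h\otimes m)=\theta(h{\mathcal S}_H(m_{(-1)}))m_{(0)}$ exactly as in the paper. After the coassociativity rewriting, all bicharacter factors depend on the collapsing pair ${\mathcal S}_H(m_{(-1)1})m_{(-1)2}$ only through its total degree $\mid m_{(-1)}\mid$, so Lemma \ref{lemma1} removes them once that pair becomes $\varepsilon_H(m_{(-1)})1_H$.
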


\begin{proof} 1. Let $M$ be a colour $(A,H)$-Hopf module. The graded  comodule structure map $ \chi_{M} $ is a graded colinear map. We know that $H\otimes M$ is a gr-injective graded $H$-comodule via $ \Delta_{H}\otimes id_{M} $ (Corollary $\ref{corollary1}$). Let us consider the $ \Bbbk $-linear map $ \lambda:H\otimes M\rightarrow M$ defined by $$\lambda(h\otimes m)=\theta(h {\mathcal S}_{H}(m_{(-1)}))m_{(0)}~~\forall~~m\in M,~~ h\in H.$$ Clearly, $\lambda$ is a graded $k$-linear map. We have 
\begin{eqnarray*}
(\lambda \circ  \chi_{M})(m) &=&\lambda(\chi_{M}(m))=\lambda(m_{(-1)}\otimes m_{(0)})\\
&=&\theta(m_{(-1)}{\mathcal S}_{H}(m_{(0)(-1)}))m_{(0)(0)}\\
&=&\theta(m_{(-1)1}{\mathcal S}_{H}(m_{(-1)2}))m_{(0)}\\
&=&\theta(\varepsilon_H(m_{(-1)})1_{H})m_{(0)}\\
&=&\theta(1_{H})(\varepsilon_H(m_{(-1)})m_{(0)}\\
&=&\theta(1_{H})m=1_{A}m=m.
\end{eqnarray*}
Therefore $\lambda \circ \chi_{M}=id_{M}$. We also have 
$$\begin{array}{l}\lambda(h \otimes m)_{(-1)} \otimes  \lambda(h \otimes m)_{(0)} \\
=(\theta(h {\mathcal S}_{H}(m_{(-1)}))m_{(0)})_{(-1)} \otimes (\theta(h {\mathcal S}_{H}(m_{(-1)}))m_{(0)})_{(0)} \\
=(\mid \theta(h {\mathcal S}_{H}(m_{(-1)}))_{(0)}\mid / \mid m_{(0)(-1)}\mid)\theta(h {\mathcal S}_{H}(m_{(-1)}))_{(-1)}m_{(0)(-1)} \otimes \theta(h {\mathcal S}_{H}(m_{(-1)}))_{(0)}m_{(0)(0)} \\
=(\mid \theta((h {\mathcal S}_{H}(m_{(-1)}))_{(0)})\mid / \mid m_{(0)(-1)}\mid)(h {\mathcal S}_{H}(m_{(-1)}))_{(-1)}m_{(0)(-1)} \otimes \theta((h {\mathcal S}_{H}(m_{(-1)}))_{(0)})m_{(0)(0)} \\
=(\mid \theta(h_2 {\mathcal S}_{H}(m_{(-1)})_{2})\mid / \mid m_{(0)(-1)}\mid)(\mid h_2\mid / \mid {\mathcal S}_{H}(m_{(-1)})_{1}\mid) \\
(h_1 {\mathcal S}_{H}(m_{(-1)})_{1}m_{(0)(-1)} \otimes \theta(h_2 {\mathcal S}_{H}(m_{(-1)})_{2})m_{(0)(0)} \\
=(\mid h_2 {\mathcal S}_{H}(m_{(-1)1})\mid / \mid m_{(0)(-1)}\mid)(\mid h_2\mid / \mid m_{(-1)2})\mid)(\mid m_{(-1)1}\mid / \mid m_{(-1)2}\mid) \\
(h_1 {\mathcal S}_{H}(m_{(-1)2})m_{(0)(-1)} \otimes \theta(h_2 {\mathcal S}_{H}(m_{(-1)1}))m_{(0)(0)} \\
=(\mid h_2 \mid / \mid m_{(-1)2}\mid)(\mid m_{(-2)} \mid / \mid m_{(-1)2}\mid)(\mid h_2\mid / \mid m_{(-1)1})\mid)(\mid m_{(-2)}\mid / \mid m_{(-1)1}\mid) \\
(h_1 {\mathcal S}_{H}(m_{(-1)1})m_{(-1)2} \otimes \theta(h_2 {\mathcal S}_{H}(m_{(-2)}))m_{(0)} \\
=(\mid h_2 \mid / \mid m_{(-1)}\mid)(\mid m_{(-2)} \mid / \mid m_{(-1)}\mid)(h_1\varepsilon_{H}(m_{(-1)})  \otimes \theta(h_2 {\mathcal S}_{H}(m_{(-2)}))m_{(0)} \\
=\varepsilon_{H}(m_{(-1)})(h_1 \otimes \theta(h_2 {\mathcal S}_{H}(m_{(-2)})))m_{(0)} \\
=h_1\otimes \theta(h_2 {\mathcal S}_{H}(m_{(-1)}))m_{(0)} \\
=h_1 \otimes \lambda(h_2 \otimes m) \\
=(h \otimes m)_{(-1)}  \otimes \lambda((h \otimes m)_{(0)}). \end{array} $$
The sixth equality is true since ${\mathcal S}_H$ is homogeneous of degree $e$ and because of the formula mentioned before the theorem. The eighth equality uses Lemma $\ref{lemma1}$.
Thus $\lambda$ is $H$-colinear. Since $\chi_{M}: M\longrightarrow H\otimes M$ is an injective graded $H$-colinear map, $ M $ is a direct summand of $H\otimes M$ as a graded $H$-comodule. We deduce that $M$ is a $gr$-injective graded $H$-comodule, being a direct summand of the gr-injective graded $H$-comodule $H \otimes M$. 

2. Let $M$ be a colour $(A,H)$-Hopf module which is injective as a graded $A$-module. We have 
$$ \begin{array}{l}\lambda(a(h \otimes m)) \\
=(\mid a_{(0)}\mid / \mid h \mid)\lambda[(a_{(-1)}h) \otimes (a_{(0)}m)] \\
=(\mid a_{(0)}\mid / \mid h \mid)\theta[a_{(-1)}h{\mathcal S}_H((a_{(0)}m)_{(-1)})](a_{(0)}m)_{(0)} \\
=(\mid a_{(0)(0)}\mid \mid a_{(0)(-1)} \mid / \mid h \mid)(\mid a_{(0)(0)} \mid / \mid m_{(-1)} \mid)\theta[a_{(-1)}h{\mathcal S}_H(a_{(0)(-1)}m_{(-1)})](a_{(0)(0)}m_{(0)}) \\
=(\mid a_{(0)}\mid \mid a_{(-1)} \mid / \mid h \mid)(\mid a_{(0)} \mid / \mid m_{(-1)} \mid)\theta[a_{(-2)}h{\mathcal S}_H(a_{(-1)}m_{(-1)})](a_{(0)}m_{(0)}) \\
=(\mid a_{(0)}\mid \mid a_{(-1)} \mid / \mid h \mid)(\mid a_{(0)} \mid / \mid m_{(-1)} \mid)\theta[a_{(-2)}h{\mathcal S}_H(a_{(-1)}){\mathcal S}_H(m_{(-1)})](a_{(0)}m_{(0)}) \\ 
=(\mid a_{(0)}\mid \mid a_{(-1)} \mid / \mid h \mid)(\mid a_{(0)} \mid / \mid m_{(-1)} \mid)(\mid h \mid / \mid a_{(-1)} \mid ) \theta[a_{(-2)}{\mathcal S}_H(a_{(-1)})h{\mathcal S}_H(m_{(-1)})](a_{(0)}m_{(0)}) \\ 
=(\mid a_{(0)}\mid / \mid h \mid)(\mid a_{(0)} \mid / \mid m_{(-1)} \mid)\theta[a_{(-2)}{\mathcal S}_H(a_{(-1)})h{\mathcal S}_H(m_{(-1)})](a_{(0)}m_{(0)}) \\
=(\mid a_{(0)}\mid / \mid h \mid)(\mid a_{(0)} \mid / \mid m_{(-1)} \mid)\theta[\varepsilon_H(a_{(-1)})h{\mathcal S}_H(m_{(-1)})](a_{(0)}m_{(0)}) \\
=(\mid a\mid / \mid h \mid)(\mid a \mid / \mid m_{(-1)} \mid)\theta[h{\mathcal S}_H(m_{(-1)})](am_{(0)}) \\
=(\mid a \mid / \mid h \mid)(\mid a \mid / \mid{\mathcal S}_H(m_{(-1)}) \mid)\theta(h{\mathcal S}_H(m_{(-1)}))(am_0) \\
=(\mid a \mid / \mid h{\mathcal S}_H(m_{(-1)}) \mid )\theta(h{\mathcal S}_H(m_{(-1)}))(am_0) \\
=(\mid a \mid / \mid \theta(h{\mathcal S}_H(m_{(-1)})) \mid )\theta(h{\mathcal S}_H(m_{(-1)}))(am_0) \\
=(\mid a \mid / \mid \theta(h{\mathcal S}_H(m_{(-1)}) \mid)(\mid \theta(h{\mathcal S}_H(m_{(-1)})) \mid / \mid a \mid)a\theta(h{\mathcal S}_H(m_{(-1)}))m_0 \\
= a[\theta(h{\mathcal S}_H(m_{(-1)}))m_0] \\
=a\lambda (h \otimes m). \end{array}$$
The fifth equality and the sixth equality are true since $H$ is colour-commutative, the bicharacter is symmetric and ${\mathcal S}_H$ is homogeneous of degree $e$. The seventh equality is true since the bicharacter is symmetric. The tenth equality is true since ${\mathcal S}_H$ is homogeneous of degree $e$. The twelfth equality is true since ${\mathcal S}_H$ and $\theta$ are homogeneous of degree $e$. The thirteenth equality is true since $A$ is colour-commutative. It follows that
$$\lambda(a(h \otimes m))=a \lambda(h \otimes m),$$ that is $\lambda$ is graded $A$-linear. Thus $\lambda$ is a homomorphism of colour $(A,H)$-Hopf modules. By Corollary $\ref{corollary1}$, $H\otimes M$ is an injective colour $(A,H)$-Hopf module. It is easy to see that the comodule structure map $\chi_{M}$ is graded $A$-linear for the given $A$-action on $H \otimes M$. Thus, $\chi_M$ is a homomorphism of colour $(A,H)$-Hopf modules. We also know that $\chi_{M}: M \rightarrow H\otimes M$ is an injective map. So $ M $ is a direct summand of $H\otimes M$ as a colour $(A,H)$-Hopf module. It follows that $M$ is an injective colour $(A,H)$-Hopf module, being a direct summand of the injective colour $(A,H)$-Hopf module $H \otimes M$ (Corollary $\ref{corollary1}$).
\end{proof}

Let $M$ be a graded left $H$-comodule. Then $A \otimes M$ is a colour $(A,H)$-Hopf module: the $A$-action is natural while the $H$-coaction is the diagonal coaction, that is,
$$\chi_{A \otimes M}(a \otimes m)= (\mid a_{(0)} \mid /\mid m_{(-1)} \mid)(a_{(-1)}m_{(-1)}) \otimes a_{(0)} \otimes m_{(0)}   \quad \forall a \in A, m \in M.$$
Clearly, the tensor product functor 
$$A \otimes - : {}^{\textnormal{gr}-H}{\mathcal M} \rightarrow {}_{\textnormal{gr}-A}^{\textnormal{gr}-H}{\mathcal M}; M \mapsto A \otimes M$$ is left adjoint to the forgetful functor
$${\mathcal F} : {}_{\textnormal{gr}-A}^{\textnormal{gr}-H}{\mathcal M} \rightarrow {}^{\textnormal{gr}-H}{\mathcal M}, \quad \hbox{that is},$$ for all $M \in {}_{\textnormal{gr}-A}^{\textnormal{gr}-H}{\mathcal M}$ and $N \in {}^{\textnormal{gr}-H}{\mathcal M}$, we have a graded $\Bbbk$-linear isomorphism
$$ \gamma : {}^{\textnormal{gr}-H}Hom(N,{\mathcal F}(M)) \rightarrow {}_{\textnormal{gr}-A}^{\textnormal{gr}-H}Hom(A \otimes N,M).$$ 
This map $\gamma$ is defined by $$\gamma(f)(a \otimes n)=af(n) \quad \forall a \in A, n \in N.$$
The inverse $\gamma'$ of $\gamma$ is given by
$$\gamma'(g)(n)=g(1_A \otimes n).$$ 

Thus if $M$ is a colour $(A,H)$-Hopf module, ${\mathcal F}(M) \in {}^{\textnormal{gr}-H}{\mathcal M}$ is an injective object if and only if $_{\textnormal{gr}-A}^{\textnormal{gr}-H}Hom(A \otimes -,M)$ is an exact functor. 
We now give one consequence of the first part of Theorem $ \ref{theorem1} $ which we can consider as the first step toward a future project in studying injective classes in the category $_{\textnormal{gr}-A}^{\textnormal{gr}-H}\mathcal{M}$. We refer to \cite{EM} for more information about relative homological algebra. For the definition of a sequence in an abelian category, we refer to \cite[Page 3]{EM}. Under the notations above, we have

\begin{corollary} \label{corollary2} Assume that there is a graded $H$-colinear map $\theta :H\rightarrow A$ such that $\theta (1_{H})=1_{A}$. Then the class of colour $(A,H)$-Hopf modules which are injective with respect to the class of sequences of colour $(A,H)$-Hopf modules $ \mathcal{P}=A\otimes \mathcal{P'}$ ($\mathcal{P'}$ is any class of exact sequences of graded $H$-comodules) is the whole category $_{\textnormal{gr}-A}^{\textnormal{gr}-H}\mathcal{M}$ of colour $(A,H)$-Hopf modules. That is, in the usual notations of relative homological algebra, we have $_{\textnormal{gr}-A}^{\textnormal{gr}-H}\mathcal{M}\Rightarrow \mathcal{P}$.
\end{corollary}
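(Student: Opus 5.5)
The plan is to reduce the statement to the first part of Theorem \ref{theorem1} via the adjunction between $A\otimes -$ and the forgetful functor ${\mathcal F}$ recalled just before the corollary.

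Recall the relative notion from \cite{EM}. A colour $(A,H)$-Hopf module $M$ is injective with respect to a class $\mathcal{P}$ of sequences when, for every sequence $E$ in $\mathcal{P}$, the sequence ${}_{\textnormal{gr}-A}^{\textnormal{gr}-H}Hom(E,M)$ is exact. The statement ${}_{\textnormal{gr}-A}^{\textnormal{gr}-H}\mathcal{M}\Rightarrow \mathcal{P}$ therefore says exactly this: for every $M$ in the category and every exact sequence $E'$ in $\mathcal{P}'$, the sequence ${}_{\textnormal{gr}-A}^{\textnormal{gr}-H}Hom(A\otimes E',M)$ is exact.

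First, I would fix such an $M$ and such an $E'$, say $\cdots\to N'\to N\to N''\to\cdots$ in ${}^{\textnormal{gr}-H}\mathcal{M}$. Applying the isomorphism $\gamma:{}^{\textnormal{gr}-H}Hom(N,{\mathcal F}(M))\to {}_{\textnormal{gr}-A}^{\textnormal{gr}-H}Hom(A\otimes N,M)$ termwise, I would check that it is natural in $N$. For an $H$-colinear $u:N_1\to N_2$ and $f:N_2\to M$,
$$\gamma(f\circ u)(a\otimes n)=af(u(n))=\gamma(f)\bigl((id_A\otimes u)(a\otimes n)\bigr).$$
So the sequence ${}_{\textnormal{gr}-A}^{\textnormal{gr}-H}Hom(A\otimes E',M)$ is isomorphic, as a complex of graded vector spaces, to ${}^{\textnormal{gr}-H}Hom(E',{\mathcal F}(M))$.

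Second, by part 1 of Theorem \ref{theorem1}, the existence of $\theta$ makes ${\mathcal F}(M)$ a gr-injective graded $H$-comodule. Hence ${}^{\textnormal{gr}-H}Hom(-,{\mathcal F}(M))$ is exact on ${}^{\textnormal{gr}-H}\mathcal{M}$, which is abelian since $\Bbbk$ is a field. It therefore carries the exact sequence $E'$ to an exact sequence. Transporting this through the natural isomorphism gives exactness of ${}_{\textnormal{gr}-A}^{\textnormal{gr}-H}Hom(A\otimes E',M)$, so $M$ is $\mathcal{P}$-injective. Since $M$ was arbitrary, the class of $\mathcal{P}$-injectives is the whole category.

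The only point needing care is that gr-injectivity, defined in the paper via extension along monomorphisms, actually yields exactness of the Hom functor on arbitrary exact sequences, not just on short exact ones. The standard argument handles this: split a long exact sequence into short exact pieces and use that $Hom(-,I)$ is left exact and, by injectivity, carries monomorphisms to epimorphisms. Everything else is formal.
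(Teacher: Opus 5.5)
Your proposal is correct and follows essentially the paper's route. Part 1 of Theorem \ref{theorem1} makes ${\mathcal F}(M)$ gr-injective, so ${}^{\textnormal{gr}-H}Hom(E',{\mathcal F}(M))$ is exact, and this exactness is transported through the adjunction isomorphism $\gamma$ to ${}_{\textnormal{gr}-A}^{\textnormal{gr}-H}Hom(A\otimes E',M)$. You are more explicit than the paper in checking that $\gamma$ is natural and that injectivity gives exactness on arbitrary exact sequences. The paper's extra remark that $A\otimes -$ is exact only shows that $\mathcal{P}$ consists of exact sequences; it is not needed for the injectivity claim.
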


\begin{proof} Since $ \Bbbk$ is a field, the functor $A \otimes -$ is exact. Let $Q$ be an exact sequence of graded $H$-comodules. Then $A \otimes Q$ is an exact sequence of $_{\textnormal{gr}-A}^{\textnormal{gr}-H}\mathcal{M}$. Let $E$ be an object of $_{\textnormal{gr}-A}^{\textnormal{gr}-H}\mathcal{M}$. Then  $_{\textnormal{gr}-A}^{\textnormal{gr}-H}Hom(A \otimes Q,E)$ a sequence of $_{gr-{\Bbbk}}\mathcal{M}$. By Theorem $\ref{theorem1}$, $\mathcal{F}(E)$ is a gr-injective graded $H$-comodule. We deduce that if  $\mathcal{P'}$ is any class of exact sequences of $^{\textnormal{gr}-H}\mathcal{M}$, then  $\mathcal{F}(E)$ is injective with respect to $\mathcal {P'}$. It follows that $E$ is injective with respect to the exact sequence $\mathcal {P}=A \otimes {\mathcal P'}$. Thus $_{\textnormal{gr}-A}^{\textnormal{gr}-H}Hom(\mathcal{P},E)$ is an exact sequence of $_{\textnormal{gr}-\Bbbk}\mathcal{M}$. Let us denote by $\mathcal{E}$ the class of all objects of $_{\textnormal{gr}-A}^{\textnormal{gr}-H}\mathcal{M}$ which are injective with respect to ${\mathcal{P}}=A \otimes {\mathcal{P'}}$. Then we have ${\mathcal E}={}_{\textnormal{gr}-A}^{\textnormal{gr}-H}\mathcal{M}$.   
\end{proof}

The following corollary is a generalization of \cite[Corollary Page 246]{Doi}.

\begin{corollary} \label{corollary3} The following assertions are equivalent:
\begin{enumerate}
\item $A$ is a \textnormal{gr}-injective graded $H$-comodule.
\item There is a graded $H$-colinear map $\theta :H\rightarrow A$ such that  $\theta (1_H)=1_A$.
\end{enumerate}
\end{corollary}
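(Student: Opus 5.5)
The implication (2) $\Rightarrow$ (1) is immediate from the first part of Theorem $\ref{theorem1}$. The graded $H$-comodule colour algebra $A$ is itself a colour $(A,H)$-Hopf module, with the left regular action and the coaction $\chi_A$. So if a graded $H$-colinear map $\theta$ with $\theta(1_H)=1_A$ exists, Theorem $\ref{theorem1}$(1) applies to $M=A$, and $A$ is a gr-injective graded $H$-comodule.

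For (1) $\Rightarrow$ (2), I would use the injectivity of $A$ directly.

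Consider the unit map $\iota:\Bbbk\to H$, $t\mapsto t1_H$. Its image is the graded $H$-subcomodule $1_H\Bbbk$ of $H$, since $\Delta_H(1_H)=1_H\otimes 1_H$ and $1_H\in H_e$. Give $\Bbbk$ the trivial grading and the trivial coaction $t\mapsto 1_H\otimes t$. Then $\iota$ is an injective graded $H$-colinear map $\Bbbk\to H$, where $H$ carries the coaction $\Delta_H$.

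Next, define $f:\Bbbk\to A$ by $f(t)=t1_A$. This is graded because $1_A\in A_e$. It is $H$-colinear because $\chi_A(1_A)=1_H\otimes 1_A$, which is part of the definition of a graded $H$-comodule colour algebra.

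Since $A$ is a gr-injective graded $H$-comodule, the defining property gives a graded $H$-colinear map $\theta:H\to A$ with $\theta\circ\iota=f$. Evaluating at $1_{\Bbbk}$ gives $\theta(1_H)=1_A$, which is (2).

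I do not expect a genuine obstacle. The only points to check are that $\Bbbk$ with trivial grading and coaction is an object of ${}^{\textnormal{gr}-H}\mathcal{M}$, and that $\iota$ and $f$ are graded $H$-colinear maps in the paper's sense. Both are one-line verifications using $1_H\in H_e$, $1_A\in A_e$, $\Delta_H(1_H)=1_H\otimes 1_H$ and $\chi_A(1_A)=1_H\otimes 1_A$.
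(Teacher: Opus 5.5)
Your proposal is correct and follows the paper's proof: (2)$\Rightarrow$(1) applies Theorem~\ref{theorem1}(1) to the colour $(A,H)$-Hopf module $A$, and (1)$\Rightarrow$(2) uses gr-injectivity of $A$ to extend the unit map $\Bbbk\to A$ along the injective graded $H$-colinear unit map $\Bbbk\to H$, then evaluates at $1_{\Bbbk}$. Your explicit checks are details the paper leaves implicit: that $\Bbbk$ with the trivial grading and coaction is a graded $H$-comodule, and that both unit maps are graded $H$-colinear.
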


\begin{proof} We know that $A$ is an $H$-comodule colour algebra. The unit map $\mu_{H} :\Bbbk \longrightarrow H$ is an injective morphism of graded $H$-comodules. Since $A$ is a gr-injective graded $H$-comodule and the unit map $\mu_{A} :\Bbbk \longrightarrow A$ is a morphism of graded $H$-comodules, there is a graded $H$-colinear map $\theta :H  \longrightarrow A$ such that $\theta \circ \mu_{H} = \mu_{A}$. It follows that $\theta(1_{H})=1_{A}$  because $ \mu_A (1_{\Bbbk})=1_{A}$ and $ \mu_H (1_{\Bbbk})=1_{H}$. Then $ 1. \Rightarrow 2.$. The assertion $2. \Rightarrow 1.$ follows from Theorem $ \ref{theorem1}$.
\end{proof}

\subsection{The fundamental theorem for colour $(A,H)$-Hopf modules}

\begin{lemma} \label{lemma6} Set $B=A^{coH}$.
\begin{enumerate}
		
\item Let $M$ be a colour $(A,H)$-Hopf module. Then $M^{coH}$ is a graded left $B$-module.
		
\item Let $M$ be a graded left $B$-module. Then $A\otimes_{B} M$ is a colour $(A,H)$-Hopf module: the $A$-action is the natural one and the $H $-comodule structure map is given by $\chi (a \otimes_{B} m) = a_{(-1)} \otimes a_{(0)} \otimes_{B} m$.
		
\item Let $M$ be a colour $(A,H)$-Hopf module. Then $ A\otimes_{B} M^{coH}\in {}_{\textnormal{gr}-A}^{\textnormal{gr}-H}{\mathcal M}$ : $a(a' \otimes_{B} m)=(aa') \otimes_{B}  m$ and $\chi (a \otimes_{B}  m) = a_{(-1)} \otimes a_{(0)} \otimes_{B} m$.
		
\end{enumerate}
\end{lemma}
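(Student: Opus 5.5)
The three parts are verified in order, each by direct computation on homogeneous elements with the colour Hopf module axiom and the fact that coinvariants have trivial coaction.

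\emph{Part 1.} Let $b\in B$ and $m\in M^{coH}$. The compatibility condition gives
$$\chi_M(bm)=(\mid b_{(0)}\mid/\mid m_{(-1)}\mid)(b_{(-1)}m_{(-1)})\otimes(b_{(0)}m_{(0)}).$$
Since $b_{(-1)}\otimes b_{(0)}=1_H\otimes b$ and $m_{(-1)}\otimes m_{(0)}=1_H\otimes m$, this equals $(\mid b\mid/e)\,1_H\otimes bm=1_H\otimes bm$. This is the same argument as in Lemma~\ref{lemma3}. Hence $BM^{coH}\subseteq M^{coH}$. Since $M^{coH}$ is a graded subspace, the grading condition $B_gM^{coH}_{g'}\subseteq M^{coH}_{gg'}$ is inherited from $M$.

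\emph{Part 2.} The main point is that the coaction is well defined on $A\otimes_B M$. I would first define $\tilde\chi=\chi_A\otimes id_M : A\otimes M\to H\otimes A\otimes M$ and compose it with the projection $H\otimes A\otimes M\to H\otimes(A\otimes_B M)$. Then I check that this map is $B$-balanced, i.e. that $ab\otimes m$ and $a\otimes bm$ have the same image. By the comodule algebra axiom, $\chi_A(ab)=(\mid a_{(0)}\mid/\mid 1_H\mid)a_{(-1)}\otimes a_{(0)}b=a_{(-1)}\otimes a_{(0)}b$, and after passing to $\otimes_B$ this equals $a_{(-1)}\otimes a_{(0)}\otimes_B bm$. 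So $\chi$ descends. It is graded because $\chi_A$ is graded.

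The remaining properties come from the corresponding properties of $\chi_A$. Coassociativity and counitality hold because they hold for $\chi_A$. The $A$-action $a(a'\otimes_B m)=aa'\otimes_B m$ is well defined, and the module is graded, by the usual argument for $A\otimes_B-$ with $A$ a graded $(A,B)$-bimodule. For the Hopf module compatibility, the element $a'\otimes_B m$ has coaction $(a'\otimes_B m)_{(-1)}=a'_{(-1)}$ and $(a'\otimes_B m)_{(0)}=a'_{(0)}\otimes_B m$. Applying $\chi_A(aa')=(\mid a_{(0)}\mid/\mid a'_{(-1)}\mid)a_{(-1)}a'_{(-1)}\otimes a_{(0)}a'_{(0)}$ and tensoring with $m$ gives exactly the required formula.

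\emph{Part 3.} This follows by applying Part 2 to the graded left $B$-module $M^{coH}$ supplied by Part 1.

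I expect the main obstacle to be the well-definedness of $\chi$ on $A\otimes_B M$. The points to watch are that the bicharacter factor disappears because $\mid 1_H\mid=e$, and that $B$ acts only through the right $B$-module structure of $A$. The other verifications are routine transfers from the corresponding properties of $\chi_A$.
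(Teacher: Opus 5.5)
Your proof is correct and follows the same route as the paper. Part 1 is the coinvariant computation, as in Lemma~\ref{lemma3}. Part 2 verifies the Hopf-module compatibility from the comodule-algebra axiom for $\chi_A$, and Part 3 combines the two. You also verify that the coaction on $A\otimes_B M$ is well defined, via $B$-balancedness using $\chi_A(ab)=a_{(-1)}\otimes a_{(0)}b$ for $b\in B$, which the paper only asserts as clear.
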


\begin{proof} 1. It is easy.

2. Clearly, $A\otimes_{B}M$ is a graded $ A $-module and a graded left $ H$-comodule. It remains to show the compatibility condition. Let $a,a' \in A$ and $m \in M$. We have
\begin{eqnarray*}
\chi(a(a' \otimes_{B} m))&=& \chi(aa'  \otimes_{B} m)\\
&=& (aa'  \otimes_{B} m)_{(-1)} \otimes (aa' \otimes_{B} m)_{(0)}\\
&=& (aa')_{(-1)} \otimes (aa')_{(0)} \otimes_{B} m\\
&=&(\mid a_{(0)} \mid / \mid a'_{(-1)} \mid)(a_{(-1)}a'_{(-1)}) \otimes (a_{(0)} a'_{(0)})  \otimes_{B} m\\
&=&(\mid a_{(0)} \mid / \mid a'_{(-1)} \mid)(a_{(-1)}a'_{(-1)})\otimes a_{(0)} (a'_{(0)}  \otimes_{B} m)\\
&=&(\mid a_{(0)} \mid / \mid (a' \otimes_Bm)_{(-1)} \mid)a_{(-1)}(a'  \otimes_{B} m)_{(-1)} \otimes a_{(0)}(a'  \otimes_{B} m)_{(0)}
\end{eqnarray*}
3. By 1., $ M^{coH}$ is a graded left $ B$-module. By 2., $ A\otimes_{B} M^{coH}$ is a colour $(A,H)$-Hopf module with the given structures.
\end{proof}

By Lemma $\ref{lemma6}$, $A \otimes_BA$ is a colour $(A,H)$-Hopf module: the $A$-action is $a(a' \otimes_Ba'')=(aa') \otimes_Ba''$ and the $H$-coaction is $\chi(a \otimes_Ba')=a_{(-1)} \otimes ( a_{(0)} \otimes_B a')$. Note that $A \otimes_BA$ is also a graded $H$-comodule colour algebra.

\bigskip

Assume that there is a graded $H$-colinear map $\theta : H \rightarrow A$. Let $M$ be a colour $(A,H)$-Hopf module. Let us consider the graded $ \Bbbk $-linear map $p_M:M\rightarrow M$ defined by $p_M(m)= \theta ({\mathcal S}_{H}(m_{(-1)})) m_{(0)}$ for all $m \in M$. 

\begin{lemma} \label{lemma7}. Assume that there is a graded $H$-colinear map $\theta : H \rightarrow A$ such that $\theta(1_H)=1_A$.
\begin{enumerate}
\item $p_M(M)=M^{coH}$
		
\item $p_M \circ p_M=p_M$: thus $p_M$ is a projection map.

\end{enumerate}
\end{lemma}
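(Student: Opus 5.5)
The plan is to reduce both assertions to two facts: $p_M(M)\subseteq M^{coH}$, and $p_M(m)=m$ for every $m\in M^{coH}$. Together these give $p_M(M)=M^{coH}$. They also give $p_M\circ p_M=p_M$, since for any $m$ the element $p_M(m)$ lies in $M^{coH}$ and is therefore fixed by $p_M$. So statement 2 follows formally from statement 1 and the fixed-point property.

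For the inclusion $p_M(M)\subseteq M^{coH}$, I would avoid a new computation and reuse the map $\lambda:H\otimes M\rightarrow M$, $\lambda(h\otimes m)=\theta(h{\mathcal S}_H(m_{(-1)}))m_{(0)}$, from the proof of Theorem $\ref{theorem1}$. Since $\theta$ is graded $H$-colinear and $1_H$ is a unit, $\lambda(1_H\otimes m)=\theta({\mathcal S}_H(m_{(-1)}))m_{(0)}=p_M(m)$. In the proof of part 1 of Theorem $\ref{theorem1}$ it was shown that $\lambda$ is graded $H$-colinear, with $H\otimes M$ carrying the coaction $\Delta_H\otimes id_M$. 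That colinearity computation uses only the $H$-colinearity of $\theta$, the Hopf module compatibility, the formula for $\Delta_H\circ{\mathcal S}_H$, the reindexing formula stated before Theorem $\ref{theorem1}$, and Lemma $\ref{lemma1}$; it does not use $\theta(1_H)=1_A$. Applying it with $h=1_H$, and using $\Delta_H(1_H)=1_H\otimes 1_H$, gives
$$\chi_M(p_M(m))=\chi_M(\lambda(1_H\otimes m))=1_H\otimes\lambda(1_H\otimes m)=1_H\otimes p_M(m).$$
Hence $p_M(m)\in M^{coH}$.

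The main risk is bookkeeping the bicharacter signs in this colinearity computation. If one prefers not to quote Theorem $\ref{theorem1}$, I would redo the computation directly with $h=1_H$ as follows. Expand $\chi_M\big(\theta({\mathcal S}_H(m_{(-1)}))m_{(0)}\big)$ using the $(A,H)$-Hopf module compatibility, then use $\theta(x)_{(-1)}\otimes\theta(x)_{(0)}=x_1\otimes\theta(x_2)$. Next write $\Delta_H({\mathcal S}_H(m_{(-1)}))=(\mid m_{(-1)1}\mid/\mid m_{(-1)2}\mid){\mathcal S}_H(m_{(-1)2})\otimes{\mathcal S}_H(m_{(-1)1})$ and reindex. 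This produces the term ${\mathcal S}_H(m_{(-1)1})m_{(-1)2}\otimes\theta({\mathcal S}_H(m_{(-2)}))m_{(0)}$, and the antipode identity collapses it to $\varepsilon_H(m_{(-1)})1_H\otimes\theta({\mathcal S}_H(m_{(-2)}))m_{(0)}$. The remaining scalar factors are characters evaluated at the degree of $m_{(-1)}$; they become $1$ after applying $\varepsilon_H$, by Lemma $\ref{lemma1}$, because ${\mathcal S}_H$ preserves degree.

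Finally, for $m\in M^{coH}$ we have $m_{(-1)}\otimes m_{(0)}=1_H\otimes m$. Since ${\mathcal S}_H(1_H)=1_H$, this gives $p_M(m)=\theta(1_H)m=1_Am=m$, which is the only place where the hypothesis $\theta(1_H)=1_A$ is needed.
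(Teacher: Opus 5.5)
Your proof is correct and essentially the paper's: the paper verifies $\chi_M(p_M(m))=1_H\otimes p_M(m)$ by exactly the direct computation you sketch. That computation is the $h=1_H$ case of the colinearity of $\lambda$ from Theorem~\ref{theorem1}, so quoting it is legitimate; the paper then gets $p_M(m)=m$ on coinvariants from $\theta(1_H)=1_A$ and deduces $p_M\circ p_M=p_M$ just as you do.
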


\begin{proof} 1. Let $m \in M$. We have

$$\begin{array}{l}\chi_{M}(p_M(m)) \\
=\chi_{M}(\theta({\mathcal S}_{H}(m_{(-1)}))m_{(0)})\\
= (\theta ({\mathcal S}_{H}(m_{(-1)})) m_{(0)})_{(-1)} \otimes (\theta ({\mathcal S}_{H}(m_{(-1)})) m_{(0)})_{(0)}\\
=(\mid \theta ({\mathcal S}_{H}(m_{(-1)}))_{(0)} \mid/ \mid m_{(0)(-1)} \mid) [(\theta ({\mathcal S}_{H}(m_{(-1)}))_{(-1)} m_{(0)(-1)}] \otimes [(\theta ({\mathcal S}_{H}(m_{(-1)}))_{(0)} m_{(0)(0)}]\\	
=(\mid \theta ({\mathcal S}_{H}(m_{(-1)})_{2}) / \mid m_{(0)(-1)}\mid) [{\mathcal S}_{H}(m_{(-1)})_{1} m_{(0)(-1)}] \otimes [\theta ({\mathcal S}_{H}(m_{(-1)})_{2}) m_{(0)(0)}]\\
=(\mid\theta ({\mathcal S}_{H}(m_{(-1)1}))\mid / \mid m_{(0)(-1)} \mid)(\mid m_{(-1)1} \mid / \mid m_{(-1)2} \mid)[{\mathcal S}_{H}(m_{(-1)2}) m_{(0)(-1)}]\\
\otimes [\theta ({\mathcal S}_{H}(m_{(-1)1})) m_{(0)(0)}]\\
=(\mid m_{(-1)1} \mid / \mid m_{(0)(-1)} \mid)(\mid m_{(-1)1} \mid / \mid m_{(-1)2}\mid) [{\mathcal S}_{H}(m_{(-1)2}) m_{(0)(-1)}] \otimes [\theta ({\mathcal S}_{H}(m_{(-1)1})) m_{(0)(0)}]\\
=(\mid m_{(-2)} \mid / \mid m_{(-1)2} \mid)(\mid m_{(-2)} \mid / \mid m_{(-1)1}\mid) [{\mathcal S}_{H}(m_{(-1)1}) m_{(-1)2}] \otimes [\theta ({\mathcal S}_{H}(m_{(-2)})) m_{(0)}]\\
=(\mid m_{(-2)} \mid/ \mid m_{(-1)} \mid) (\varepsilon_{H}(m_{(-1)})1_H) \otimes [\theta ({\mathcal S}_{H}(m_{-2})) m_{(0)}]\\
=(\varepsilon_{H}(m_{(-1)})1_H \otimes [\theta ({\mathcal S}_{H}(m_{-2})) m_{(0)}] \quad  {\bf by \quad Lemma \quad \ref{lemma1}}\\
=1_H \otimes [\theta({\mathcal S}_{H}(m_{(-1)}))m_{(0)}]\\
=1_H \otimes p_M(m).
\end{array}$$
The sixth equality is true since $\mid \theta \mid=\mid {\mathcal S}_H \mid=e$. The seventh equality uses the formula mentioned before Theorem $\ref{theorem1}$.

So $p_M(M)$ is a vector subspace of $ M^{coH}$. We have $m=p_M(m)$ for all $m \in M^{coH}$. So $M^{coH}$ is contained in $p_M(M)$.

2. Let $m \in M$. We have $p_M(m) \in M^{coH}$. Thus we get $p_M(p_M(m))=p_M(m)$. 

\end{proof}
 
We are now in the position to provide the Fundamental Theorem for colour $(A,H)$-Hopf modules: it is a generalization of \cite[Theorem 3]{Doi} and \cite[Theorem 7: first assertion]{Wang}.

\begin{theorem} \label{theorem2}
Let $H$ be a colour Hopf algebra and $A$ a graded $H$-comodule colour algebra. Set $ B= A^{coH}$. Assume that there is a graded $H$-colinear map $\theta :H\rightarrow A$ which is a homomorphism of colour algebras. Then for every colour $(A,H)$-Hopf module $M$, the map $\alpha:A\otimes_{B}M^{coH}\rightarrow M$ defined by $\alpha(a \otimes_{B} m)=am$ is an isomorphism of colour $(A,H)$-Hopf modules.
\end{theorem}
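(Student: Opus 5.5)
We first check that $\alpha$ is a well-defined morphism of colour $(A,H)$-Hopf modules. It is well defined on $A\otimes_B M^{coH}$ because $M$ is a left $A$-module and $B\subseteq A$. It is graded, and it is clearly $A$-linear for the action $a(a'\otimes_B m)=aa'\otimes_B m$ of Lemma \ref{lemma6}. For colinearity, take $m\in M^{coH}$, so that $m_{(-1)}\otimes m_{(0)}=1_H\otimes m$. The compatibility condition in $M$ then gives
$\chi_M(am)=(\mid a_{(0)}\mid/\mid 1_H\mid)a_{(-1)}\otimes a_{(0)}m=a_{(-1)}\otimes a_{(0)}m$, and this equals $(id_H\otimes\alpha)\chi(a\otimes_B m)$. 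It therefore suffices to show that $\alpha$ is bijective. Since $\theta(1_H)=1_A$ for an algebra map, Lemma \ref{lemma7} applies: the map $p_M(m)=\theta(\mathcal S_H(m_{(-1)}))m_{(0)}$ takes values in $M^{coH}$.

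As a candidate inverse we take $\beta:M\to A\otimes_B M^{coH}$, $\beta(m)=\theta(m_{(-1)})\otimes_B p_M(m_{(0)})$. By coassociativity this can be rewritten as $\beta(m)=\theta(m_{(-2)})\otimes_B\theta(\mathcal S_H(m_{(-1)}))m_{(0)}$. The map is graded because $\theta$ and $\mathcal S_H$ preserve degree.

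For $\alpha\circ\beta=id_M$ we compute $\alpha\beta(m)=\theta(m_{(-2)})\theta(\mathcal S_H(m_{(-1)}))m_{(0)}$. Since $\theta$ is multiplicative, this equals $\theta(m_{(-1)1}\mathcal S_H(m_{(-1)2}))m_{(0)}=\varepsilon_H(m_{(-1)})\theta(1_H)m_{(0)}=m$. No colour sign appears in this step, since only products inside $H$ are involved.

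For $\beta\circ\alpha=id$ we need $\beta(am)=a\otimes_B m$ for $a\in A$ and $m\in M^{coH}$. Using $\chi_M(am)=a_{(-1)}\otimes a_{(0)}m$, we get $\beta(am)=\theta(a_{(-1)})\otimes_B p_M(a_{(0)}m)$. The key auxiliary identity is $p_M(a'm)=p_A(a')m$ for $m\in M^{coH}$, which follows from $\theta\circ\mathcal S_H$ applied to $a'_{(-1)}$. Since $p_A(a')\in B$, it can be moved across $\otimes_B$, giving $\beta(am)=\theta(a_{(-1)})p_A(a_{(0)})\otimes_B m$. Finally, $\theta(a_{(-1)})p_A(a_{(0)})=\theta(a_{(-2)})\theta(\mathcal S_H(a_{(-1)}))a_{(0)}=a$, by the same computation as in $\alpha\beta=id$ with $M=A$.

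We expect the main obstacle to be bookkeeping rather than ideas. One must make sure that $\beta$ is well defined as written, with the Sweedler expression split correctly between the two tensor factors. One must also check that no bicharacter factors arise. The relevant formulas only ever multiply elements within $H$, or act by an element of $A$ on the left of $M$, so twisting factors appear only when the compatibility condition is used. There they collapse either by Lemma \ref{lemma1} or because $m_{(-1)}=1_H$ has degree $e$. We would verify each such instance carefully, since this is where the colour setting could differ from Doi's proof.
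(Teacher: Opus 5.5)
Your proposal is correct and follows essentially the same route as the paper. You use the same inverse $\beta(m)=\theta(m_{(-1)})\otimes_B p_M(m_{(0)})$, and you verify $\beta\circ\alpha=id$ in the same way, by moving $p_A(a_{(0)})=\theta(\mathcal S_H(a_{(-1)}))a_{(0)}\in B$ across $\otimes_B$; your identity $p_M(a'm)=p_A(a')m$ for $m\in M^{coH}$ packages this step; the one addition is that you carry out $\alpha\circ\beta=id_M$ (correctly, with no bicharacter factors), which the paper leaves as ``standard''.
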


\begin{proof} Clearly, $\alpha$ is a well-defined graded $A$-linear map. For $m\in  M^{coH}$ and $a\in A$, we have 
\begin{eqnarray*}
\alpha (a \otimes_{ B} m)_{(-1)} \otimes \alpha (a \otimes_{ B} m)_{(0)} &=& (am)_{(-1)} \otimes (am)_{(0)}\\
&=& (\mid a_{(0)}\mid /\mid 1_{H} \mid)(a_{(-1)}1_{H})\otimes (a_{(0)}m)\\
&=& (a_{(-1)})\otimes (a_{(0)}m)\\
&=& (a \otimes_B m)_{(-1)}\otimes \alpha((a \otimes_B m)_{(0)}). 
\end{eqnarray*} So $\alpha$ is a graded $H$-colinear map.

By Lemma $\ref{lemma7}$, we get a well-defined graded $ \Bbbk $-linear map 
\begin{center}
\begin{tabular}{rl}
$ \beta :$ & $M \longrightarrow A\otimes_{B} M^{coH}$ \\ &$ m\longmapsto \theta (m_{(-1)})\otimes_{B} p_M(m_{(0)})$.
\end{tabular}
\end{center}
Let $a \in A$ and $m \in M^{coH}$. We have
\begin{eqnarray*} \beta(am)&=& \theta((am)_{(-1)}) \otimes_Bp_M((am)_{(0)})\\
&=&(\mid a_{(0)} \mid /\mid m_{(-1)} \mid) \theta(a_{(-1)}m_{(-1)}) \otimes_B p_M(a_{(0)}m_{(0)})\\
&=&(\mid a_{(0)} \mid /\mid 1_{H} \mid) \theta(a_{(-1)}1_H) \otimes_B p_M(a_{(0)}m)\\
&=&\theta(a_{(-1)}) \otimes_B \theta(S_H((a_{(0)}m)_{(-1)}))(a_{(0)}m)_{(0)}\\
&=&(\mid a_{(0)(0)}\mid / \mid m_{(-1)} \mid) \theta(a_{(-1)}) \otimes_B [\theta(S_H(a_{(0)(-1)}m_{(-1)}))a_{(0)(0)}m_{(0)}]\\
&=&\theta(a_{(-1)}) \otimes_B [\theta(S_H(a_{(0)(-1)}))a_{(0)(0)}m]\\
&=&\theta(a_{(-2)}) \otimes_B [\theta(S_H(a_{(-1)}))a_{(0)}m]\\
&=&\theta(a_{(-2)})[\theta(S_H(a_{(-1)}))a_{(0)}] \otimes_B m \quad since \quad \theta(S_H(a_{(-1)}))a_{(0)} \in B\\
&=& \theta(a_{(-2)}S_H(a_{(-1)}))a_{(0)} \otimes_B m\\
&=& \theta(1_H)(\varepsilon_H(a_{(-1)})a_{(0)}) \otimes_B m\\
&=&(1_A a) \otimes_B m=a \otimes_Bm.
\end{eqnarray*}

Thus $\beta \circ \alpha=id_{A\otimes_{B} M^{coH}}$. The proof of $\alpha\circ \beta=id_{M}$ is standard. It follows that $\alpha $ is a bijection.
\end{proof}

Theorem $\ref{theorem2}$ shows that there is a one-to-one correspondence (up to equivalence) between graded $B$-modules and colour $(A,H)$-Hopf modules.

Now we will give some consequences of Theorem $\ref{theorem2}$.

\bigskip

A colour ideal in a colour-commutative colour algebra $A$ is a (two-sided) ideal of $A$ such that $I=\bigoplus_{g \in G}(I \cap A_g)$, that is, a (two-sided) ideal of $A$ generated by homogeneous elements.

Let $A$ be a colour-commutative graded $H$-comodule colour algebra. We say that $I$ is a colour $H$-ideal of $A$ if $I$ is a colour ideal of $A$ and a graded $H$-subcomodule of $A$ with the same gradation. We say that $A$ is colour $H$-simple if the only colour $H$-ideals of $A$ are $(0)$ and $A$.

A colour division ring is a colour algebra in which every non-zero homogeneous element is invertible \cite[Chapter 2, Page 46]{NV2}. 

Let $D$ be a colour division ring and $M$ a graded $D$-module. By \cite[Section 2]{KS}, as in the ungraded case, the following assertions hold true:

(1) every $G$-graded $D$-module is graded free;

(2) any $D$-linearly independant subset of $M$ consisting of homogeneous elements can be extended to a homogeneous basis of $M$;

(3) any two homogeneous bases of $M$ over $D$ have the same cardinality.

\begin{lemma} 
\label{lemma8}
Let $A$ be a colour-commutative colour $H$-simple graded $H$-comodule colour algebra. Then $A^{coH}$ is a colour-commutative colour division ring.
\end{lemma}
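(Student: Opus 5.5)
The plan is to show that for any nonzero homogeneous $b\in B=A^{coH}$, the principal ideal $Ab$ is a colour $H$-ideal of $A$. Colour $H$-simplicity then forces $Ab=A$, which produces an inverse of $b$; the remaining work is to check that this inverse lies in $B$. Colour-commutativity of $B$ is immediate, since $B$ is a colour subalgebra of the colour-commutative algebra $A$ (Lemma \ref{lemma3}).

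\textbf{Step 1: $Ab$ is a colour $H$-ideal.} Fix $b\in B_g$ with $b\neq 0$. Because $A$ is colour-commutative, $ab=(\mid a\mid/\mid b\mid)ba$ for homogeneous $a$, so the left ideal $Ab$ equals $bA$ and is therefore a two-sided ideal. It is generated by the homogeneous element $b$, so it is graded: $(Ab)_h=A_{hg^{-1}}b$. It is a graded $H$-subcomodule because, using $\chi_A(b)=1_H\otimes b$ and the compatibility of $\chi_A$ with multiplication,
$$\chi_A(ab)=(\mid a_{(0)}\mid/\mid 1_H\mid)\,a_{(-1)}1_H\otimes a_{(0)}b=a_{(-1)}\otimes a_{(0)}b\in H\otimes Ab.$$
Since $1_A b=b\neq 0$, the ideal $Ab$ is nonzero, so $H$-simplicity gives $Ab=A$.

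\textbf{Step 2: the inverse lies in $B$.} From $Ab=A$ there is some $c\in A$ with $cb=1_A$. Taking the component of degree $e$ (note $1_A\in A_e$), we may assume $c$ is homogeneous of degree $g^{-1}$. Colour-commutativity gives $bc=(\mid b\mid/\mid c\mid)cb$, a nonzero scalar multiple of $1_A$. Hence $b$ has a two-sided inverse in $A$ up to this scalar, and rescaling $c$ gives a genuine inverse $b^{-1}=c$ (up to scalar normalization). To show $c\in B$, apply $\chi_A$ to $cb=1_A$:
$$1_H\otimes 1_A=\chi_A(cb)=(\mid c_{(0)}\mid/\mid 1_H\mid)\,c_{(-1)}\otimes c_{(0)}b=c_{(-1)}\otimes c_{(0)}b.$$
Now multiply the second tensor factor on the right by $c$, that is, apply $id_H\otimes r_c$ where $r_c$ is right multiplication by $c$. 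Using $bc=\lambda 1_A$ with $\lambda\neq 0$, this gives $1_H\otimes c=\lambda^{-1}\cdot\lambda\, c_{(-1)}\otimes c_{(0)}$ after the rescaling. Therefore $\chi_A(c)=1_H\otimes c$, so $c\in B$.

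\textbf{Main obstacle.} The delicate points are the scalar bookkeeping with the bicharacter and the choice of a homogeneous inverse. I expect neither to be a real difficulty, since $\chi_A(b)=1_H\otimes b$ involves only degree $e$ on the $H$ side, so all bicharacter factors coming from $1_H$ equal $1$. The one thing to watch is that right multiplication by $c$ commutes with $id_H\otimes -$ in the last step, which holds because this map acts only on the second tensor factor.
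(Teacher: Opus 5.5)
Your proof is correct. Step 1 is the paper's own argument: the paper uses $aA$ instead of your $Ab$, shows it is a nonzero colour $H$-ideal, concludes $aA=A$ from $H$-simplicity, and produces a homogeneous $a'$ with $aa'=1_A$. Your choice of $Ab$ has the small advantage that $\chi_A(ab)=a_{(-1)}\otimes a_{(0)}b$ carries no bicharacter factor at all.

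The paper stops at that point and declares $a$ invertible. It never checks that $a'$ lies in $A^{coH}$, which is what is needed for $A^{coH}$ itself to be a colour division ring. Your Step 2 supplies exactly this missing verification, so your argument is more complete than the paper's.

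One cleanup is needed in Step 2: there is no rescaling to do, and rescaling $c$ would in fact break $cb=1_A$. The scalar $\lambda=(g/g^{-1})$ is forced to equal $1$, because $c=(cb)c=c(bc)=\lambda c$ and $c\neq 0$. With $bc=1_A$, the coinvariance step then reads directly
$c_{(-1)}\otimes c_{(0)}=c_{(-1)}\otimes c_{(0)}bc=1_H\otimes c$.
Alternatively, your computation as written yields $\chi_A(c)=\lambda^{-1}(1_H\otimes c)$, and applying $\varepsilon_H\otimes id_A$ again gives $\lambda=1$.
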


\begin{proof} Let $a$ be a nonzero homogeneous element of $A^{coH}$. Then $aA$ is a colour ideal of $A$. For all $u \in A$, we have 
\begin{eqnarray*}
\chi_A(au)&=& (au)_{(-1)} \otimes (au)_{(0)}\\
&=&(\mid u_{(-1)} \mid /\mid a_{(0)} \mid)(a_{(-1)}u_{(-1)})\otimes (a_{(0)}u_{(0)})\\
&=&(\mid u_{(-1)} \mid /\mid a \mid)(1_Hu_{(-1)})\otimes (au_{(0)})\\
&=&[(\mid u_{(-1)} \mid /\mid a \mid)u_{(-1)}]\otimes (au_{(0)}) \in H \otimes (aA); 
\end{eqnarray*}
this means that $aA$ is an $H$-subcomodule of $A$. Therefore, $aA$ is a colour $H$-ideal of $A$. It contains the nonzero element $a$. So $aA$ is nonzero. Thus $aA=A$ since $A$ is colour $H$-simple. So $1_A \in aA$, and there is a homogeneous element $a' \in A$ such that $aa'=1_A$, that is, $a$ is invertible.
\end{proof}

The following corollary generalizes \cite[Theorem 7: second assertion]{Wang}.

\begin{corollary} \label{corollary4} 
Let $A$ be a colour-commutative graded $H$-comodule colour algebra and $M$ a colour $(A,H)$-Hopf module. Assume that $B=A^{coH}$ is a colour division ring (for example, if $A$ is colour $H$-simple) and there is a graded $H$-colinear map $\theta : H \rightarrow A$ which is a homomorphism of colour algebras. Then $M$ is graded free as a graded $A$-module with rank the rank of the graded $B$-module $M^{coH}$.
\end{corollary}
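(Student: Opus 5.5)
The plan is to combine the Fundamental Theorem (Theorem \ref{theorem2}) with the structure theory of graded modules over a colour division ring quoted from \cite{KS}.

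First, by Lemma \ref{lemma6}(1), $M^{coH}$ is a graded left $B$-module. Since $B$ is a colour division ring, assertion (1) above shows that $M^{coH}$ is graded free over $B$. So we can choose a homogeneous $B$-basis $(m_i)_{i\in I}$ with $|m_i|=g_i$. Equivalently, there is an isomorphism of graded $B$-modules $M^{coH}\cong\bigoplus_{i\in I}B(g_i)$, with the shift convention chosen so that $m_i$ corresponds to $1_B$ in the $i$-th summand. By assertion (3), the cardinality of $I$ does not depend on the basis; we call it the rank of $M^{coH}$.

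Second, since $\theta$ is a graded $H$-colinear homomorphism of colour algebras, Theorem \ref{theorem2} gives an isomorphism of colour $(A,H)$-Hopf modules $\alpha:A\otimes_B M^{coH}\to M$. In particular, $\alpha$ is an isomorphism of graded $A$-modules. The tensor product $A\otimes_B -$ commutes with direct sums, and $A\otimes_B B(g)\cong A(g)$ as graded left $A$-modules via $a\otimes_B b\mapsto ab$. This map is graded because the grading on the tensor product is induced from $A\otimes M^{coH}$. Hence
$$M\cong A\otimes_B M^{coH}\cong\bigoplus_{i\in I}A\otimes_B B(g_i)\cong\bigoplus_{i\in I}A(g_i).$$
So $M$ is graded free over $A$, and the homogeneous elements $\alpha(1_A\otimes_B m_i)=m_i$ form an $A$-basis of $M$.

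Finally, the rank claim says that $M$ has a homogeneous $A$-basis indexed by $I$, of cardinality equal to the rank of $M^{coH}$ over $B$. The main point needing care is the degree bookkeeping in $A\otimes_B B(g_i)\cong A(g_i)$: one must check that the shift convention $N(g)_{g'}=N_{gg'}$ matches the degree of $m_i$, inverting $g_i$ if necessary. This affects only the labels of the shifts, not freeness or rank.

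Colour-commutativity of $A$ is not needed for the argument itself. It enters only through the example hypothesis, where it lets Lemma \ref{lemma8} provide that $B$ is a colour division ring.
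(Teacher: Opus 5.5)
Your proposal is correct and follows the paper's proof: $M^{coH}$ is graded free over the colour division ring $B$, Theorem~\ref{theorem2} gives $M\cong A\otimes_B M^{coH}$, and $A\otimes_B\bigoplus_{i} B(g_i)\cong\bigoplus_{i}A(g_i)$. Your two extra remarks are accurate points the paper leaves implicit: the shift labels may need inverting under the convention $N(g)_{g'}=N_{gg'}$, and colour-commutativity is used only to obtain $B$ as a colour division ring via Lemma~\ref{lemma8}.
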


\begin{proof} Since $B$ is a colour-division ring, $M^{coH}$ is graded free as a graded $B$-module. So $M^{coH}$ is isomorphic to some $\bigoplus_{i \in I}B{(g_i)}$, where $I$ is an index set and $(g_i)_{i \in I}$ is a family of elements of $G$. Then we have
$$A \otimes_BM^{coH}=A \otimes_B(\bigoplus_{i \in I}B{(g_i)})=\bigoplus_{i \in I}(A \otimes_BB{(g_i)})=\bigoplus_{i \in I}(A{(g_i)}).$$ 
We deduce from Theorem $\ref{theorem2}$ that $M$ is isomorphic to $\bigoplus_{i \in I}A{(g_i)}$. This means that $M$ is graded free as a graded $A$-module with rank the rank of $M^{coH}$.
\end{proof}

\subsection{Hopf-Galois extension of graded $H$-comodule colour algebras}

Let $A$ be a graded $H$-comodule colour algebra. Set $ B= A^{coH}$. Let $M$ be a graded left $B$-module. By Lemma $\ref{lemma6}$, $A\otimes_B M $ is a colour $(A,H)$-Hopf module. Thus we get a covariant functor $$\textit{F} =A\otimes_{B} - : {}_{\textnormal{gr}-B}\mathcal{M}\longrightarrow {}_{\textnormal{gr}-A}^{\textnormal{gr}-H}\mathcal{M} $$ 
and a covariant functor $$ \textit{G}=(-)^{coH} : {}_{\textnormal{gr}-A}^{\textnormal{gr}-H}\mathcal{M}\longrightarrow {}_{\textnormal{gr}-B}\mathcal{M}; M\mapsto M^{coH}$$ 

\begin{proposition} \label{proposition1}
Let $H$ be a colour Hopf algebra and $A$ a graded $H$-comodule colour algebra. Set $B=A^{coH}$. Let $ M \in {}_{\textnormal{gr}-A}^{\textnormal{gr}-H}\mathcal{M}~~and~~N\in {}_{\textnormal{gr}-B}\mathcal{M}.$ There is a functorial isomorphism of colour $(A,H)$-Hopf modules $$ \Psi : {}_{\textnormal{gr}-A}^{\textnormal{gr}-H}{Hom(A\otimes_{B} N,M)}\rightarrow {}_{\textnormal{gr}-B}Hom(N,M^{coH}); f\mapsto [n\mapsto f(1_{A}\otimes_B n)], $$ with inverse map $ \Psi' $ given by $ g\mapsto [a\otimes_{B} n \mapsto ag(n)].$
Thus, the functors $ F $ and $ G $ form an adjoint pair with unit and counit $$\eta_{N}:N\mapsto (A\otimes_{B} N)^{coH}, n\mapsto 1_{A}\otimes_{B} n $$ and $$\alpha_{M}: A\otimes_{B}M^{coH}\rightarrow M; a\otimes_{B} m\mapsto am.$$
\end{proposition}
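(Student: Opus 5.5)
The plan is a direct verification. I will check that $\Psi$ and $\Psi'$ are well defined with values in the stated Hom-spaces, that they are mutually inverse graded $\Bbbk$-linear maps, and that they are natural in $N$ and $M$. The adjunction, with the stated unit and counit, then follows formally.

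First, $\Psi$. Let $f$ be a morphism of colour $(A,H)$-Hopf modules and put $g(n)=f(1_A\otimes_B n)$; $g$ is graded because $1_A\in A_e$. To see that $g(n)\in M^{coH}$, note that $\chi(1_A\otimes_B n)=1_H\otimes 1_A\otimes_B n$ by Lemma $\ref{lemma6}$ and $\chi_A(1_A)=1_H\otimes 1_A$. Colinearity of $f$ then gives $\chi_M(g(n))=1_H\otimes g(n)$. $B$-linearity is immediate: $g(bn)=f(b\otimes_B n)=f(b(1_A\otimes_B n))=bg(n)$.

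Next, $\Psi'$. Given a graded $B$-linear $g:N\to M^{coH}$, the assignment $(a,n)\mapsto ag(n)$ is $B$-balanced, since $ag(bn)=a(bg(n))=(ab)g(n)$. It therefore descends to a graded map $f:A\otimes_B N\to M$, and $f$ is clearly $A$-linear. Colinearity is the only place where the colour structure enters, and I expect it to be the step needing care. Using the Hopf module compatibility and $g(n)_{(-1)}\otimes g(n)_{(0)}=1_H\otimes g(n)$, the twist factor becomes $(|a_{(0)}|/|1_H|)=(|a_{(0)}|/e)=1$. This gives $\chi_M(ag(n))=a_{(-1)}\otimes a_{(0)}g(n)=(\mathrm{id}_H\otimes f)\chi(a\otimes_B n)$, exactly as in the colinearity computation for $\alpha$ in Theorem $\ref{theorem2}$.

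Then I check the two compositions. One direction is $\Psi\Psi'(g)(n)=1_Ag(n)=g(n)$. For the other, $\Psi'\Psi(f)(a\otimes_B n)=af(1_A\otimes_B n)=f(a\otimes_B n)$ by $A$-linearity of $f$. Naturality is routine: for $u:N'\to N$ and $v:M\to M'$, both $\Psi(v\circ f\circ(A\otimes_B u))$ and $v^{coH}\circ\Psi(f)\circ u$ send $n'$ to $v(f(1_A\otimes_B u(n')))$.

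Hence $F\dashv G$. The unit is $\eta_N=\Psi(\mathrm{id}_{A\otimes_B N})$, namely $n\mapsto 1_A\otimes_B n$, which lands in $(A\otimes_B N)^{coH}$ by the first step. The counit is $\Psi'(\mathrm{id}_{M^{coH}})$, namely $a\otimes_B m\mapsto am$, which is $\alpha_M$. No integral $\theta$ is needed anywhere in this argument.
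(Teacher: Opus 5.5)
Your proposal is correct and follows the paper's proof essentially step by step. Both check that $\Psi(f)$ is coinvariant-valued and $B$-linear, that $\Psi'(g)$ is colinear because the twist $(|a_{(0)}|/|1_H|)=1$ vanishes, and that $\Psi$ and $\Psi'$ are mutually inverse. You additionally spell out the $B$-balancedness of $(a,n)\mapsto ag(n)$, the naturality in $N$ and $M$, and the identification of the unit and counit as $\Psi(\mathrm{id}_{A\otimes_B N})$ and $\Psi'(\mathrm{id}_{M^{coH}})$, all of which the paper leaves implicit.
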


\begin{proof} $ f(1_{A}\otimes_{B} n)\in M^{coH}$ since $f$ is graded $H$-colinear. Let $b \in B$ and $n \in N$. We have 
\begin{eqnarray*}
\Psi (f)(bn) &=& f(1_{A} \otimes_{B} (bn)) = f((1_{A}b) \otimes_{B} n)\\
&=& f((b1_{A}) \otimes_{B} n) =  f(b(1_{A} \otimes_{B} n))\\
&=& bf(1_{A} \otimes_{B} n)= b(\Psi (f)(n)),
\end{eqnarray*}
since $f$ is graded $A$-linear, a fortiori graded $ B$-linear. Thus, the map $ \Psi $ is well defined. Clearly, $ \Psi'(g)$ is graded $ A $-linear. We show that $ \Psi'(g)$ is graded $ H$-colinear:
\begin{eqnarray*}
\Psi'(g)(a\otimes_{B} n)_{(-1)}\otimes \Psi'(g)(a\otimes_{B} n)_{(0)} &=&(ag(n))_{(-1)}\otimes (ag(n))_{(0)}\\
&=& (\mid a_{(0)} \mid /\mid g(n)_{(-1)} \mid)(a_{(-1)}g(n)_{(-1)})\otimes (a_{(0)}g(n)_{(0)})\\
&=& (\mid a_{(0)} \mid /\mid 1_H \mid)(a_{(-1)}1_H)\otimes (a_{(0)}g(n))\\
&=& a_{(-1)}\otimes a_{(0)}g(n)\\
&=& a_{(-1)}\otimes \Psi'(g)(a_{(0)}\otimes_{B} n)\\
&=&(a\otimes_{B} n)_{(-1)} \otimes \Psi'(g)((a\otimes_{B} n)_{(0)}).
\end{eqnarray*}
Thus the graded $\Bbbk$-linear map $\Psi'$ is well defined. It is easy to check that $ \Psi\circ \Psi'~~and~~ \Psi'\circ \Psi$ are respectively the identity of $ _{\textnormal{gr}-B}Hom(N,M^{coH})$ and $_{\textnormal{gr}-A}^{\textnormal{gr}-H}{Hom(A\otimes_{B} N,M)}$. This means that $\Psi$ is bijective with inverse $\Psi'$.
\end{proof}

For the proof of the following Lemma, we refer to \cite[Lemma 23]{CMZ} for the ungraded case.

\begin{lemma} \label{lemma9} Set $B= A^{coH}$. Assume that there is a graded $H$-colinear map $\theta :H\rightarrow A$ such that $\theta(1_H)=1_A$. Then for every graded left $B$-module $N$, the map $\eta: N \rightarrow (A\otimes_{B} N)^{coH}$ defined by $\eta(n)=1_A \otimes_{B} n$ is an isomorphism of graded $B$-modules.
\end{lemma}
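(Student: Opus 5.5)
The plan follows the ungraded argument of \cite[Lemma 23]{CMZ}: build an explicit inverse to $\eta$ from the total integral $\theta$.

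\textbf{Routine part.} First we check that $\eta$ is a well-defined graded $B$-linear map into $(A\otimes_B N)^{coH}$. By Lemma $\ref{lemma6}$,
$$\chi(1_A\otimes_B n)=1_H\otimes 1_A\otimes_B n,$$
so $\eta(n)$ is coinvariant. $B$-linearity is $1_A\otimes_B bn = b\otimes_B n = b(1_A\otimes_B n)$, and $\eta$ clearly preserves degrees.

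\textbf{The candidate inverse.} Consider the graded $\Bbbk$-linear map $A\to B$,
$$a\mapsto p_A(a)=\theta(\mathcal S_H(a_{(-1)}))a_{(0)}.$$
By Lemma $\ref{lemma7}$ (applied to the colour $(A,H)$-Hopf module $A$), it lands in $B$ and restricts to the identity on $B$. We would like to define
$$\mu:A\otimes_B N\to N,\qquad \mu(a\otimes_B n)=p_A(a)\,n .$$
For $\mu$ to be well defined we need $p_A$ to be right $B$-linear, i.e. $p_A(ab)=p_A(a)b$ for $b\in B$. This follows from the comodule-algebra axiom:
$$\chi_A(ab)=(\mid a_{(0)}\mid/\mid 1_H\mid)\,a_{(-1)}\otimes a_{(0)}b=a_{(-1)}\otimes a_{(0)}b.$$
Hence $\mu$ is a well-defined graded $\Bbbk$-linear map, and its restriction $\mu'$ to $(A\otimes_B N)^{coH}$ is the candidate inverse.

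\textbf{The easy composite.} We have
$$\mu'\eta(n)=p_A(1_A)\,n=\theta(1_H)\,n=n,$$
using $\theta(1_H)=1_A$ and $\mathcal S_H(1_H)=1_H$.

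\textbf{Main obstacle: $\eta\circ\mu'=\mathrm{id}$.} Let $x=\sum_i a_i\otimes_B n_i$ be coinvariant; we must show
$$1_A\otimes_B \textstyle\sum_i p_A(a_i)n_i=x .$$
Since $p_A(a_i)\in B$, the left side equals $\sum_i p_A(a_i)\otimes_B n_i$. So the claim reduces to showing that the map
$$\pi:A\otimes_B N\to A\otimes_B N,\qquad a\otimes_B n\mapsto p_A(a)\otimes_B n$$
fixes coinvariants. Note that $\pi$ is well defined by the right $B$-linearity of $p_A$. Moreover $\pi=\phi\circ\chi$, where
$$\phi:H\otimes(A\otimes_B N)\to A\otimes_B N,\qquad h\otimes(a\otimes_B n)\mapsto \theta(\mathcal S_H(h))\,a\otimes_B n ,$$
uses the left $A$-action. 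Indeed,
$$\phi(\chi(a\otimes_B n))=\theta(\mathcal S_H(a_{(-1)}))a_{(0)}\otimes_B n=p_A(a)\otimes_B n.$$
Now if $x$ is coinvariant then $\chi(x)=1_H\otimes x$, so
$$\pi(x)=\phi(1_H\otimes x)=\theta(1_H)\,x=x.$$
This is exactly the computation showing that $p_M$ is the identity on $M^{coH}$ in Lemma $\ref{lemma7}$, applied to $M=A\otimes_B N$. The only things to verify carefully are that $\phi$ is well defined on the tensor product over $B$ (clear, since it acts by left multiplication) and that no colour signs enter the coaction formula of Lemma $\ref{lemma6}$. With that, $\eta\mu'=\mathrm{id}$ and $\eta$ is an isomorphism of graded $B$-modules.
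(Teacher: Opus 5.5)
Your proof is correct and follows the paper's argument. The paper uses the same inverse $\eta'(\sum_i a_i\otimes_B n_i)=\sum_i p_A(a_i)n_i$ and the same two computations; its step $\sum_i\theta(\mathcal S_H((a_i)_{(-1)}))(a_i)_{(0)}\otimes_B n_i=\sum_i\theta(\mathcal S_H(1_H))a_i\otimes_B n_i$ is exactly your $\phi\circ\chi(x)=\phi(1_H\otimes x)$, and your check that $p_A(ab)=p_A(a)b$ for $b\in B$ (so that $\eta'$ is well defined on $A\otimes_B N$) supplies a point the paper leaves implicit.
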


\begin{proof} Let $\sum_i(a_i \otimes_Bn_i) \in (A\otimes_{B} N)^{coH}$. Define a map $\eta'$ from $(A\otimes_{B} N)^{coH} \rightarrow N$ by $\eta'(\sum_i(a_i \otimes_Bn_i))=\sum_i (p_A(a_i)n_i)$. Since $\sum_i(a_i \otimes_Bn_i)$ is an $H$-coinvariant element, we have
$$\chi(\sum_i(a_i \otimes_Bn_i))=\sum_i((a_i)_{-1} \otimes (a_i)_{(0)} \otimes_Bn_i)= \sum_i( 1_H \otimes a_i \otimes_Bn_i).$$ 
Let $n \in N$. We have
$$\eta'(\eta(n))=\eta'(1_A \otimes_Bn)=p_A(1_A)n= \theta({\mathcal S}_H(1_H))1_An=\theta(1_H)1_An=n.$$ 
We also have 
$$\begin{array}{rcl}\eta(\eta'(\sum_i(a_i \otimes_Bn_i)))&=& \eta(\sum_i (p_A(a_i)n_i))\\
&=&1_A \otimes_B \sum_i (p_A(a_i)n_i) \\
&=&\sum_i [1_A \otimes_B (p_A(a_i)n_i)]\\
&=& \sum_i [1_Ap_A(a_i) \otimes_B n_i \\
&=& \sum_i \theta({\mathcal S}_H((a_i)_{(-1)}))( a_i)_{(0)} \otimes_Bn_i\\
&=& \sum_i \theta({\mathcal S}_H(1_H))a_i \otimes_Bn_i\\
&=& \sum_i(a_i \otimes_Bn_i).\end{array}$$
The fourth equality is true since each $p_A(a_i) \in B$.
Thus we have proved that $\eta$ is bijective with inverse $\eta'$ 
\end{proof}

The following corollary is an application of Theorem $\ref{theorem2}$ and Lemma $\ref{lemma9}$.

\begin{corollary} \label{corollary5} Set $B= A^{coH}$. Assume that there is a graded $H$-colinear map $\theta :H\rightarrow A$ such that $\theta(1_H)=1_A$. Then the functor 
$$F= A \otimes_B- : {}_{\textnormal{gr}-B}\mathcal{M} \longrightarrow  {}_{\textnormal{gr}-A}^{\textnormal{gr}-H}\mathcal{M}$$ is Maschke, that is, every object of $_{\textnormal{gr}-A}^{\textnormal{gr}-H}\mathcal{M}$ is $F$-relative injective. If furthermore, $\theta$ is a colour algebra map, then the functor 
$$G=(-)^{coH} : {}_{\textnormal{gr}-A}^{\textnormal{gr}-H}\mathcal{M}\longrightarrow {}_{\textnormal{gr}-B}\mathcal{M}$$ is dual Maschke, that is, every object of $_{\textnormal{gr}-A}^{\textnormal{gr}-H}\mathcal{M}$ is $G$-relative projective.
\end{corollary}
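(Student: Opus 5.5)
We use the standard characterization of (dual) Maschke functors for an adjoint pair (see \cite[Theorem 3.4]{CM}). If $F$ is left adjoint to $G$ with unit $\eta$ and counit $\alpha$, then:
\begin{itemize}
\item $F$ is Maschke (every object of ${}_{\textnormal{gr}-A}^{\textnormal{gr}-H}\mathcal{M}$ is $F$-relative injective) if and only if the unit $\eta$ splits, i.e.\ there is a natural transformation $\nu: GF\rightarrow 1$ with $\nu\circ\eta=1$;
\item $G$ is dual Maschke (every object is $G$-relative projective) if and only if the counit $\alpha$ cosplits, i.e.\ there is a natural $\zeta: 1\rightarrow FG$ with $\alpha\circ\zeta=1$.
\end{itemize}
Proposition $\ref{proposition1}$ supplies exactly such an adjoint pair $(F,G)$, with unit $\eta_N(n)=1_A\otimes_B n$ and counit $\alpha_M(a\otimes_B m)=am$. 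So the plan is to invoke that proposition and then produce the required (co)splittings from Lemma $\ref{lemma9}$ and Theorem $\ref{theorem2}$.

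\textbf{First assertion.} Assume only that $\theta$ is graded $H$-colinear with $\theta(1_H)=1_A$. Lemma $\ref{lemma9}$ shows that each $\eta_N$ is an isomorphism of graded $B$-modules. Its inverse $\eta'_N$, given by $\sum_i a_i\otimes_B n_i\mapsto \sum_i p_A(a_i)n_i$, is then automatically natural in $N$, being the componentwise inverse of a natural isomorphism. We take $\nu=\eta^{-1}$. Then $\eta$ is split, and $F$ is Maschke.

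For readers who prefer not to cite the criterion, the same conclusion can be reached directly.
\begin{itemize}
\item Let $F(i):F(N)\rightarrow F(N')$ be a morphism that splits in ${}_{\textnormal{gr}-A}^{\textnormal{gr}-H}\mathcal{M}$, and let $M$ be an object; we must show $M$ is injective relative to $F(i)$.
\item By adjunction, a morphism $F(N)\rightarrow M$ is the same as a $B$-map $N\rightarrow G(M)$.
\item Since $GF\cong 1$ naturally, $F$ is fully faithful. Hence $F(i)$ being split forces $i$ to be split.
\item The required extension then transports back across the adjunction.
\end{itemize}

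\textbf{Second assertion.} Assume further that $\theta$ is a colour algebra map. By Theorem $\ref{theorem2}$, each $\alpha_M$ is an isomorphism of colour $(A,H)$-Hopf modules, with inverse $\beta_M(m)=\theta(m_{(-1)})\otimes_B p_M(m_{(0)})$. We take $\zeta=\alpha^{-1}$. Naturality of $\zeta$ in $M$ again follows formally, since $\alpha$ is a natural transformation whose components are invertible. This cosplits $\alpha$, so $G$ is dual Maschke: every object is $G$-relative projective. Alternatively, as above, $G$ reflects splittings because $FG\cong 1$.

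The only delicate point is bookkeeping. We must use the precise definitions of $F$-relative injectivity and $G$-relative projectivity (relative to morphisms $f$ such that $F(f)$, respectively $G(f)$, splits), and match them with the (co)splitting criterion. Once that is set up, all substantive work is already contained in Lemma $\ref{lemma9}$ and Theorem $\ref{theorem2}$. In particular, no gradings or bicharacter signs enter beyond what was verified there.
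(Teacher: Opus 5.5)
Your proposal is correct and follows the paper's proof. Both apply \cite[Theorem 3.4]{CM} to the adjoint pair $(F,G)$ of Proposition~\ref{proposition1}, splitting the unit $\eta$ by Lemma~\ref{lemma9} and cosplitting the counit $\alpha$ by Theorem~\ref{theorem2}. One precision: that criterion only asks each component $\eta_N$ (resp.\ $\alpha_M$) to have a one-sided inverse, whereas a \emph{natural} splitting is Rafael's characterization of separability, so your ``if and only if'' overstates it---harmless, since you exhibit natural inverses and use only the valid sufficient direction (and, strictly, $F$-relative injective objects live in ${}_{\textnormal{gr}-B}\mathcal{M}$, which is what your fully-faithfulness remark actually shows).
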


\begin{proof} In \cite[Theorem 3.4]{CM}, take $\mathcal{C}={}_{\textnormal{gr}-B}\mathcal{M}$, $\mathcal{ D}={}_{\textnormal{gr}-A}^{\textnormal{gr}-H}\mathcal{M}$, $\mathcal{E}={}_{\textnormal{gr}-A}^{\textnormal{gr}-H}\mathcal{M}$ and $H=1_{\mathcal{C}}$.
By \cite[Definition 3.1]{CM}, a $F$-relative $1_{\mathcal{C}}$-injective object is called a $F$-relative injective object. By \cite[Theorem 3.4]{CM}, an object $P$ of ${}_{\textnormal{gr}-A}^{\textnormal{gr}-H}\mathcal{M}$ is $F$-relative injective if and only if the unit map $\eta_P: P \rightarrow GF(P)$ has a left inverse. The first assertion follows from Lemma $ \ref{lemma9}$ and Proposition $\ref{proposition1}$. We refer also the reader to \cite[Proposition 50]{CMZ}.	
	
By \cite[Definition 3.1]{CM}, a $G$-relative $1_{\mathcal{C}}$-projective object is called a $G$-relative projective object. By \cite[Theorem 3.4]{CM}, an object $P$ of ${}_{\textnormal{gr}-A}^{\textnormal{gr}-H}\mathcal{M}$ is $G$-relative projective if and only if the counit map $\alpha_P:(FG)(P) \rightarrow P$ has a right inverse. The result follows from Theorem $\ref{theorem2}$ and Proposition $\ref{proposition1}$. We refer also the reader to \cite[Proposition 50]{CMZ}.
\end{proof}

Set $B=A^{coH}$. We say that $A$ is a graded Hopf-Galois extension of $B$ if the graded $\Bbbk$-linear map
$$can : A \otimes_BA \rightarrow H \otimes A; \quad a \otimes_Ba' \mapsto a_{(-1)} \otimes (a_{(0)}a')$$ is an isomorphism of colour $(A,H)$-Hopf modules: $H \otimes A$ and $A \otimes_B A$ are considered as colour $(A,H)$-Hopf modules as in Lemmas $\ref{lemma4}$ and $\ref{lemma6}$.

\bigskip
We arrive to another application of Theorem $\ref{theorem2}$.
 
\begin{theorem} \label{theorem3}
Under the assumptions of Theorem $\ref{theorem2}$,
\begin{enumerate}

\item the adjoint pair $(F,G)$ between $_{\textnormal{gr}-B}\mathcal{M}$ and  $_{\textnormal{gr}-A}^{\textnormal{gr}-H}\mathcal{M}$ (Proposition $\ref{proposition1}$) is a pair of inverse equivalences.

\item $A$ is faithfully flat as a graded right $B$-module.

\item $A$ is a graded Hopf-Galois extension of $B$.
\end{enumerate}
\end{theorem}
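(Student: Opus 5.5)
For part 1, I will use the adjunction of Proposition \ref{proposition1}. Under the hypotheses of Theorem \ref{theorem2}, the map $\theta$ is an algebra map. Hence $\theta(1_H)=1_A$, and Lemma \ref{lemma9} applies: the unit $\eta_N : N\to (A\otimes_B N)^{coH}$ is an isomorphism of graded $B$-modules for every $N$. Theorem \ref{theorem2} says the counit $\alpha_M : A\otimes_B M^{coH}\to M$ is an isomorphism of colour $(A,H)$-Hopf modules for every $M$. An adjoint pair whose unit and counit are both natural isomorphisms is a pair of inverse equivalences, so $F$ and $G$ are mutually quasi-inverse.

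For part 2, flatness follows from exactness of $F$. Since $F\cong G^{-1}$ is an equivalence between abelian categories, it is exact. Moreover, a sequence in ${}_{\textnormal{gr}-A}^{\textnormal{gr}-H}\mathcal{M}$ is exact exactly when its underlying sequence of graded vector spaces is exact, because kernels and cokernels are computed on the underlying spaces (the paper notes this category is abelian). Therefore $A\otimes_B-$ preserves exact sequences of graded $B$-modules, i.e. $A$ is gr-flat as a right $B$-module. For faithfulness, suppose $A\otimes_B N=0$. Then $N\cong (A\otimes_B N)^{coH}=0$ by Lemma \ref{lemma9}, so $A\otimes_B-$ reflects zero objects. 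An exact functor that reflects zero objects also reflects exactness, since it can be applied to the homology of a complex. This gives faithful flatness in the graded sense. If ungraded flatness is wanted, I would add the remark that every $B$-module is a direct limit of finitely presented modules, which can be taken graded-compatible. I expect this graded/ungraded distinction to be the only subtle point, and I would state the result in the graded category as the theorem does.

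For part 3, I will apply Theorem \ref{theorem2} to $M=H\otimes A$, which is a colour $(A,H)$-Hopf module by Lemma \ref{lemma4}. The first step is to check that $can$ is a morphism of colour $(A,H)$-Hopf modules. Colinearity is immediate from coassociativity: $can(a\otimes_B a')=a_{(-1)}\otimes a_{(0)}a'$ has coaction $a_{(-2)}\otimes a_{(-1)}\otimes a_{(0)}a'$, which matches. $A$-linearity follows from the multiplicativity of $\chi_A$ together with the twisted action $a(h\otimes a')=(\mid a_{(0)}\mid/\mid h\mid)a_{(-1)}h\otimes a_{(0)}a'$. Well-definedness over $B$ uses $b\in A^{coH}$. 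The second step is to compute the coinvariants. By Lemma \ref{lemma5}(1), an element of $(H\otimes A)^{coH}$ is determined by its image under $\varepsilon_H\otimes id$, so $(H\otimes A)^{coH}=1_H\otimes A$, the coaction on $H\otimes A$ being $\Delta_H\otimes id_A$. As a graded $B$-module this is $A$ with $B$ acting via $b(1_H\otimes a')=1_H\otimes ba'$. The third step is to apply Theorem \ref{theorem2}: the map $\alpha : A\otimes_B (1_H\otimes A)\to H\otimes A$, $a\otimes_B(1\otimes a')\mapsto a(1\otimes a')=a_{(-1)}\otimes a_{(0)}a'$, is an isomorphism. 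Under the identification $A\otimes_B A\cong A\otimes_B(1_H\otimes A)$, this map is exactly $can$. Hence $can$ is an isomorphism and $A$ is a graded Hopf-Galois extension of $B$.
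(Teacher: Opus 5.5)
Your proposal is correct and follows essentially the paper's proof: part 1 comes from the unit and counit isomorphisms of Lemma~\ref{lemma9} and Theorem~\ref{theorem2}; part 2 from exactness of the equivalence $F$, where your ``$F$ reflects zero objects'' argument is just the paper's use of $GF\cong\mathrm{id}$; and part 3 from the counit at $H\otimes A$ together with $(H\otimes A)^{coH}=1_H\otimes A$. Your aside on ungraded flatness is not needed for the graded statement, and the direct-limit sentence is not a valid argument as written; if you want that extension, use the standard graded-ring fact that a graded module is flat in the graded category if and only if it is flat as an ungraded module.
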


\begin{proof} 1. By Theorem $\ref{theorem2}$, the counit map of the adjoint pair $(F,G)$ is an isomorphism of colour $(A,H)$-Hopf modules. By Lemma $\ref{lemma9}$, the unit map of the adjoint pair $(F,G)$ is an isomorphism of graded $B$-modules. We get 1.
	
2. By 1., the categories $_{\textnormal{gr}-B}\mathcal{M}$ and  $_{\textnormal{gr}-A}^{\textnormal{gr}-H}\mathcal{M}$ are equivalent. Thus if 
$$0 \rightarrow N \rightarrow N' \rightarrow N'' \rightarrow 0$$ is an exact sequence in $_{\textnormal{gr}-B}\mathcal{M}$, then 
$$0 \rightarrow F(N) \rightarrow F( N') \rightarrow F(N'') \rightarrow 0$$
is an exact sequence in $_{\textnormal{gr}-A}^{\textnormal{gr}-H}\mathcal{M}$. From this observation, $A$ is flat as a graded right $B$-module. Let   
$$0 \rightarrow N \rightarrow N' \rightarrow N'' \rightarrow 0$$ be a sequence in $_{\textnormal{gr}-B}\mathcal{M}$ such that
$$0 \rightarrow F(N) \rightarrow F( N') \rightarrow F(N'') \rightarrow 0$$ is an exact sequence in $_{\textnormal{gr}-A}\mathcal{M}$. It is easy to see that the above sequence is exact in $_{\textnormal{gr}-A}^{\textnormal{gr}-H}\mathcal{M}$. Thus applying $G$, we see that $$0 \rightarrow N \rightarrow N' \rightarrow N'' \rightarrow 0$$ is an exact sequence in $_{\textnormal{gr}-B}\mathcal{M}$. 

3. We have the following isomorphisms of graded left $B$-modules:
$$(H \otimes A)^{coH} \simeq H^{coH} \otimes A \simeq {\Bbbk}1_H \otimes A \simeq A.$$ The isomorphism is given by $a' \mapsto 1_H \otimes a'$ for all $a' \in A$. By applying $F$ and using 1., we get an isomorphism of colour $(A,H)$-Hopf modules $A \otimes_BA \simeq A \otimes_B(H \otimes A)^{coH}$: the isomorphism is given by $a \otimes_Ba' \mapsto a \otimes_B (1_H \otimes a')$ for all $a,a' \in A$. 
Since the counit map $\alpha_{H \otimes A}$ is an isomorphism of colour  $(A,H)$-Hopf modules from $A \otimes_B (H \otimes A)^{coH}$ to $H \otimes A$, we get an isomorphism of colour $(A,H)$-Hopf modules from $A \otimes_BA$ to $H \otimes A$ defined by $$a \otimes_Ba' \mapsto a(1_H \otimes a')=(a_{(-1)}1_H) \otimes (a_{(0)}a')=a_{(-1)} \otimes (a_{(0)}a').$$ 
\end{proof}

For related results to Theorem $\ref{theorem3}$ in the ungraded case, we refer to \cite[Theorem 50]{CMZ} under the assumptions that there is an $H$-colinear map $\theta$ from $H$ to $A$ such that $\theta(1_H)=1_A$ and the map $can$ is surjective.

\subsection{Graded global dimension of a colour-commutative graded $H$-comodule algebra}

Over any colour-commutative algebra $A$, every graded left $A$-module is a graded right $A$-module: $ma=(\mid m \mid / \mid a \mid)am$.

\begin{proposition} \label{proposition3} Let $H$ be a colour Hopf algebra, $A$ a colour-commutative graded $H$-comodule colour algebra, $B=A^{coH}$, $P$ and $M$ graded left $A$-modules. If there is a graded $H$-colinear map $\theta : H \rightarrow A$ which is a homomorphism of colour algebras, then 
	
\begin{enumerate}
		
\item $P \otimes_BM$ is a graded $A$-module : the action is given by $$a(p \otimes_Bm)=(\mid a_{(0)} \mid / \mid p \mid)(\theta(a_{(-1)})p) \otimes_B(a_{(0)}m)$$ for all $a \in A$, $p \in P$ and $m \in M$.
		
\item $A \otimes_BM$ is a colour $(A,H)$-Hopf module: the action is given by $$a'(a \otimes_Bm)=(\mid a'_{(0)} \mid / \mid a \mid)(\theta(a'_{(-1)})a) \otimes_B(a'_{(0)}m)$$ and the coaction is $$(a \otimes_Bm)_{(-1)} \otimes(a \otimes_Bm)_{(0)}=a_{(-1)} \otimes a_{(0)} \otimes_Bm$$ for all $a,a' \in A$ and $m \in M$.
		
\end{enumerate}
\end{proposition}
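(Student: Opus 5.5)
Because $A$ is colour-commutative, every graded left $A$-module (and so every graded left $B$-module) is also a graded right module via $ma=(\mid m\mid/\mid a\mid)am$. This is what makes $P\otimes_BM$ meaningful, and the first job is to record this. I will then define the action through the composite $A\xrightarrow{\chi_A}H\otimes A\xrightarrow{\theta\otimes id}A\otimes A$, that is, $a\mapsto \theta(a_{(-1)})\otimes a_{(0)}$. The key preliminary observation is that this composite is a homomorphism of colour algebras into the colour tensor product algebra $A\otimes A$. Two facts give this. First, $\chi_A$ is multiplicative into the colour tensor product $H\otimes A$, which is exactly the comodule-algebra axiom $\chi_A(aa')=(\mid a_{(0)}\mid/\mid a'_{(-1)}\mid)a_{(-1)}a'_{(-1)}\otimes a_{(0)}a'_{(0)}$. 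Second, $\theta\otimes id$ is an algebra map between colour tensor products, since $\theta$ is a degree-preserving algebra map. Unitality follows from $\chi_A(1_A)=1_H\otimes 1_A$ and $\theta(1_H)=1_A$.

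For part 1, I will first check that the formula is well defined on $P\otimes_BM$, i.e.\ balanced over $B$. I need $a\cdot(pb\otimes_B m)=a\cdot(p\otimes_B bm)$ for $b\in B$. Expanding, the left side contains $\theta(a_{(-1)})pb\otimes a_{(0)}m$ and the right side contains $\theta(a_{(-1)})p\otimes a_{(0)}bm$. Using colour-commutativity I move $b$ past $a_{(0)}$, and then move $b$ across $\otimes_B$. Because $b$ is coinvariant, the bicharacter factors cancel; the only inputs are $\mid a_{(-1)}\mid\mid a_{(0)}\mid=\mid a\mid$ and $\mid\theta\mid=e$. Next, grading is clear because $\theta$ has degree $e$. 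The unit acts trivially since $\chi_A(1)=1_H\otimes1_A$. For associativity, $(aa')\cdot x=a\cdot(a'\cdot x)$, I expand $\chi_A(aa')$ and use multiplicativity of $\theta$. I then reorder the factors: $\theta(a_{(-1)})\theta(a'_{(-1)})p$ on the left, and $a_{(0)}a'_{(0)}m$ on the right. A bicharacter bookkeeping then matches the scalars: $(\mid a_{(0)}\mid/\mid a'_{(-1)}\mid)$ from $\chi_A$ combined with $(\mid a_{(0)}a'_{(0)}\mid/\mid p\mid)$ must equal the scalars $(\mid a'_{(0)}\mid/\mid p\mid)(\mid a_{(0)}\mid/\mid\theta(a'_{(-1)})p\mid)$ obtained by acting twice. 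These agree by bimultiplicativity. Conceptually, this is just the statement that $P\otimes M$ is a module over $A\otimes A$ via $(x\otimes y)(p\otimes m)=(\mid y\mid/\mid p\mid)xp\otimes ym$, pulled back along the algebra map above, and I will organize the computation that way to avoid a long chain.

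For part 2, I take $P=A$. Part 1 gives the graded $A$-module structure. The coaction $a\otimes_Bm\mapsto a_{(-1)}\otimes a_{(0)}\otimes_Bm$ is well defined, because right multiplication by $b\in B$ commutes with $\chi_A$ up to the trivial coinvariant factor. It is a graded coaction, as in Lemma \ref{lemma6}. The remaining point is compatibility:
$$\chi(a'\cdot(a\otimes_Bm))=(\mid a'_{(0)}\mid/\mid a_{(-1)}\mid)a'_{(-1)}a_{(-1)}\otimes a'_{(0)}\cdot(a_{(0)}\otimes_Bm).$$
To prove it, I apply $\chi$ to $(\mid a'_{(0)}\mid/\mid a\mid)\theta(a'_{(-1)})a\otimes_B a'_{(0)}m$. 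I use the comodule-algebra rule on the product $\theta(a'_{(-1)})a$, and colinearity of $\theta$, which gives $\theta(h)_{(-1)}\otimes\theta(h)_{(0)}=h_1\otimes\theta(h_2)$. Coassociativity rewrites $a'_{(-1)1}\otimes a'_{(-1)2}\otimes a'_{(0)}$ as $a'_{(-1)}\otimes a'_{(0)(-1)}\otimes a'_{(0)(0)}$. After that, the right side is recognizable.

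I expect the main obstacle to be the scalar bookkeeping in this last compatibility check, together with the balancedness check in part 1. There, the convention for the right $B$-action on $A$ and on $M$ must be used consistently. I will fix the convention $mb=(\mid m\mid/\mid b\mid)bm$ once and for all. Wherever factors like $(\mid a_{(0)}\mid/\mid a_{(-1)}\mid)$ threaten to appear, I will use the degree identities $\mid\theta(h)\mid=\mid h\mid$ and $\mid a_{(-1)}\mid\mid a_{(0)}\mid=\mid a\mid$, so that the factors collapse to $(\mid a'_{(0)}\mid/\mid a\mid)$-type terms that match on both sides.
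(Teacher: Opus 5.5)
Your proposal is correct and follows essentially the same route as the paper. Both verify $B$-balancedness, then associativity, then the Hopf compatibility for $P=A$ via the comodule-algebra rule, colinearity of $\theta$ and coassociativity; the only difference is that you obtain associativity by pulling back the $A\otimes A$-module $P\otimes M$ along the colour algebra map $a\mapsto\theta(a_{(-1)})\otimes a_{(0)}$, where the paper writes out the chain of equalities directly.

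One small correction: coinvariance of $b$ plays no role in the balancedness check. The cancellation comes only from $(\mid a_{(0)}\mid/\mid p\mid\mid b\mid)=(\mid a_{(0)}\mid/\mid p\mid)(\mid a_{(0)}\mid/\mid b\mid)$ together with $a_{(0)}b=(\mid a_{(0)}\mid/\mid b\mid)ba_{(0)}$, as in the paper. Coinvariance is needed only for the well-definedness of the coaction in part 2, which you handle correctly.
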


\begin{proof} 1. Let $a \in A$, $b \in B$, $p \in P$ and $m \in M$. We have 
\begin{eqnarray*} a(p \otimes_B(bm))&=& (\mid a_{(0)}\mid/ \mid p\mid) (\theta(a_{(-1)})p) \otimes_B(a_{(0)}(bm)) \\
&=& (\mid a_{(0)}\mid/ \mid p\mid) (\theta(a_{(-1)})p) \otimes_B((a_{(0)}b)m) \\
&=& (\mid a_{(0)}\mid/ \mid p \mid)(\mid a_{(0)}\mid / \mid b\mid) (\theta(a_{(-1)})p) \otimes_B((ba_{(0)})m) \\
&=& (\mid a_{(0)}\mid/ \mid p \mid)(\mid a_{(0)}\mid / \mid b\mid) (\theta(a_{(-1)})pb) \otimes_B(a_{(0)}m) \\
&=& (\mid a_{(0)}\mid/ \mid p \mid \mid b\mid) (\theta(a_{(-1)})pb) \otimes_B(a_{(0)}m) \\
&=& (\mid a_{(0)}\mid/ \mid pb \mid) (\theta(a_{(-1)})pb) \otimes_B(a_{(0)}m) \\
&=& a((pb) \otimes_B m).
\end{eqnarray*}
The third equality is true because $A$ is colour-commutative. So the $A$-action is well defined.

Let $a,a' \in A$, $p \in P$ and $m \in M$. We have
\begin{eqnarray*} (aa')(p \otimes_Bm)&=& (\mid (aa')_{(0)} \mid / \mid p \mid)\theta((aa')_{(-1)})p \otimes_B(aa')_{(0)}m\\
&=& (\mid a_{(0)}a'_{(0)}\mid / \mid p \mid)(\mid a_{(0)} \mid/ \mid a'_{(-1)} \mid)\theta(a_{(-1)}a'_{(-1)})p \otimes_B(a_{(0)}a'_{(0)})m\\
&=& (\mid a_{(0)}a'_{(0)}\mid / \mid p \mid)(\mid a_{(0)} \mid / \mid a'_{(-1)} \mid )\theta(a_{(-1)})\theta(a'_{(-1)})p \otimes_B(a_{(0)}a'_{(0)})m\\
&=&(\mid a'_{(0)}\mid / \mid p \mid)(\mid a_{(0)}\mid / \mid p\mid \mid  a'_{(-1)} \mid)[\theta(a_{(-1)})\theta(a'_{(-1)})p] \otimes_Ba_{(0)}(a'_{(0)}m)\\
&=&(\mid a'_{(0)}\mid / \mid p \mid)(\mid a_{(0)}\mid / \mid \theta(a'_{(-1)}) \mid \mid p \mid)[\theta(a_{(-1)})\theta(a'_{(-1)})p] \otimes_Ba_{(0)}(a'_{(0)}m)\\
&=&(\mid a'_{(0)}\mid / \mid p \mid)(\mid a_{(0)}\mid / \mid \theta(a'_{(-1)})p \mid)[\theta(a_{(-1)})\theta(a'_{(-1)})p] \otimes_Ba_{(0)}(a'_{(0)}m)\\
&=&(\mid a'_{(0)}\mid / \mid p \mid)a[\theta(a'_{(-1)})p \otimes_Ba'_{(0)}m]\\
&=& a[a'(p \otimes_Bm)].\end{eqnarray*}
The fourth equality is true since 
\begin{eqnarray*}(\mid a_{(0)}a'_{(0)}\mid / \mid p \mid)(\mid a_{(0)} \mid / \mid a'_{(-1)}\mid)&=&(\mid a'_{(0)}\mid / \mid p \mid)(\mid a_{(0)}\mid / \mid p \mid \mid  a'_{(-1)} \mid). \end{eqnarray*} 
The fifth equality is true since $\theta$ is homogeneous of degree $e$.
So $P \otimes_BM$ is a graded left $A$-module. 

2. By 1., $A \otimes_BM$ is a graded left $A$-module. Clearly, the $H$-coaction is well defined and $A \otimes_BM$ is a graded $H$-comodule for the given $H$-coaction. Now we have
$$ \begin{array}{l}[a'(a \otimes_Bm)]_{(-1)} \otimes [a'(a \otimes_Bm)]_{(0)} \\
=(\mid a'_{(0)} \mid/\mid a \mid)[(\theta(a'_{(-1)})a) \otimes_B(a'_{(0)}m)]_{(-1)} \otimes [(\theta(a'_{(-1)})a) \otimes_B(a'_{(0)}m)]_{(0)} \\
=(\mid a'_{(0)} \mid/\mid a \mid)(\theta(a'_{(-1)})a)_{(-1)} \otimes (\theta(a'_{(-1)})a)_{(0)} \otimes_B(a'_{(0)}m) \\
=(\mid a'_{(0)} \mid/\mid a_{(0)} \mid \mid a_{(-1)}\mid)(\mid \theta(a'_{(-1)})_{(0)} \mid/ a_{(-1)}\mid \mid)[\theta(a'_{(-1)})_{(-1)}a_{(-1)}] \otimes [\theta(a'_{(-1)})_{(0)}a_{(0)}] \otimes_B(a'_{(0)}m) \\
=(\mid a'_{(0)} \mid/ \mid a_{(0)} \mid \mid a_{(-1)}\mid)(\mid \theta(a'_{(-1)(0)}) \mid/ \mid a_{(-1)} \mid)[(a'_{(-1)(-1)})a_{(-1)}] \otimes [\theta(a'_{(-1)(0)})a_{(0)}] \otimes_B(a'_{(0)}m) \\
=(\mid a'_{(0)} \mid/ \mid a_{(0)} \mid \mid a_{(-1)}\mid)(\mid \theta(a'_{(-1)2}) \mid/ \mid a_{(-1)} \mid)[(a'_{(-1)1})a_{(-1)}] \otimes [\theta(a'_{(-1)2})a_{(0)}] \otimes_B(a'_{(0)}m) \\
=(\mid a'_{(0)} \mid/ \mid a_{(0)} \mid \mid a_{(-1)}\mid)(\mid a'_{(-1)2} \mid/ \mid a_{(-1)} \mid)[(a'_{(-1)1})a_{(-1)}] \otimes [\theta(a'_{(-1)2})a_{(0)}] \otimes_B(a'_{(0)}m) \\
=(\mid a'_{(0)} \mid / \mid a_{(0)} \mid \mid a_{(-1)}\mid)(\mid a'_{(-1)} \mid/ \mid a_{(-1)} \mid)[(a'_{(-2)})a_{(-1)}] \otimes [\theta(a'_{(-1)})a_{(0)}] \otimes_B(a'_{(0)}m). \end{array}$$
The seventh equality is true since $\theta$ is homogeneous of degree $e$.
We also have
$$ \begin{array}{l}(\mid  a'_{(0)} \mid/ \mid (a \otimes_Bm)_{(-1)} \mid)a'_{(-1)}(a \otimes_Bm)_{(-1)} \otimes a'_{(0)}(a \otimes_Bm)_{(0)} \\
=(\mid  a'_{(0)} \mid/ \mid  a_{(-1)} \mid)(a'_{(-1)}a_{(-1)}) \otimes a'_{(0)}(a_{(0)} \otimes_Bm) \\
=(\mid  a'_{(0)(0)}a'_{(0)(-1)} \mid/  \mid a_{(-1)} \mid) (\mid  a'_{(0)(0)} \mid /  \mid a_{(0)} \mid)   (a'_{(-1)}a_{(-1)}) \otimes (\theta(a'_{(0)(-1)})a_{(0)}) \otimes_B(a'_{(0)(0)}m) \\
=(\mid  a'_{(0)}a'_{(-1)} \mid/  \mid a_{(-1)} \mid) (\mid  a'_{(0)} \mid /  \mid a_{(0)} \mid)(a'_{(-2)}a_{(-1)}) \otimes (\theta(a'_{(-1)})a_{(0)}) \otimes_B(a'_{(0)}m). \end{array}$$
Since $$(\mid a'_{(0)} \mid/ \mid a_{(0)} \mid \mid a_{(-1)}\mid)(\mid a'_{(-1)} \mid/ \mid a_{(-1)} \mid)=(\mid  a'_{(0)}a'_{(-1)} \mid/  \mid a_{(-1)} \mid) (\mid  a'_{(0)} \mid /  \mid a_{(0)} \mid),$$
we have $$ \begin{array}{l}[a'(a \otimes_Bm)]_{(-1)} \otimes [a'(a \otimes_Bm)]_{(0)} \\
=(\mid  a'_{(0)} \mid/ \mid (a \otimes_Bm)_{(-1)} \mid)a'_{(-1)}(a \otimes_Bm)_{(-1)} \otimes a'_{(0)}(a \otimes_Bm)_{(0)}. \end{array}$$ 
Thus $A \otimes_BM$ is a colour $(A,H)$-Hopf module. 
\end{proof}

\bigskip

\begin{proposition} \label{proposition4} Let $H$ be a colour Hopf algebra, $A$ a colour-commutative graded $H$-comodule colour algebra, $B=A^{coH}$ and $M$ a colour $(A,H)$-Hopf module. Assume that there is a graded $H$-colinear map $\theta : H \rightarrow A$ which is a homomorphism of colour algebras and the bicharacter is symmetric. For $a\in A$ and $m \in M$, set $a \rightharpoonup m= p_M(am)$. Then 
\begin{enumerate}
\item $a \rightharpoonup p_M(m)=p_M(am)=a \rightharpoonup m$;

\item $M^{coH}$ is a graded left $A$-module : the action is given by $a \rightharpoonup m= p_M(am)$ for all $a\in A$ and $m \in M^{coH}$. In particular, $A^{coH}$ is a graded left $A$-module under the $A$-action $a' \rightharpoonup a=p_A(a'a)$ for all $a' \in A$ and $a \in A^{coH}$.
\end{enumerate}
\end{proposition}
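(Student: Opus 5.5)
The whole argument rests on one key identity: $p_M(a\,p_M(m)) = p_M(am)$ for all $a\in A$, $m\in M$. Once we have it, part 2 follows formally.

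To prove the identity, I would expand $p_M(a\,p_M(m))$. Write $p_M(m)=\theta(\mathcal S_H(m_{(-1)}))m_{(0)}$. Since $\theta$ is degree $e$ and $A$ is colour-commutative, the factor $\theta(\mathcal S_H(m_{(-1)}))$ can be moved to the left of $a$. This gives
$$a\,p_M(m)=(\mid a\mid/\mid m_{(-1)}\mid)\,\theta(\mathcal S_H(m_{(-1)}))\,a m_{(0)}.$$
Next, for $x\in A$ and $n\in M$, I would compute $p_M(xn)$ directly from the Hopf-module compatibility. Using $\chi_M(xn)=(\mid x_{(0)}\mid/\mid n_{(-1)}\mid)x_{(-1)}n_{(-1)}\otimes x_{(0)}n_{(0)}$, together with $\mathcal S_H(x_{(-1)}n_{(-1)})=(\mid x_{(-1)}\mid/\mid n_{(-1)}\mid)\mathcal S_H(n_{(-1)})\mathcal S_H(x_{(-1)})$ and the fact that $\theta$ is an algebra map, one gets
$$p_M(xn)=\text{(scalar)}\;\theta(\mathcal S_H(n_{(-1)}))\,\theta(\mathcal S_H(x_{(-1)}))\,x_{(0)}n_{(0)}.$$
Colour-commutativity and the symmetric bicharacter let me reorder this as a product $p_A(x)$ times the $\theta(\mathcal S_H(n_{(-1)}))$ part acting on $n_{(0)}$. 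Applying this with $x=\theta(\mathcal S_H(m_{(-1)}))$ and using $H$-colinearity of $\theta$, so that $\chi_A(\theta(h))=h_1\otimes\theta(h_2)$, I get
$$p_A(\theta(h))=\theta(\mathcal S_H(h_1))\theta(h_2)=\theta(\mathcal S_H(h_1)h_2)=\varepsilon_H(h)1_A$$
up to twist scalars. Combining this with coassociativity of the $m$-coaction (the four-fold formula stated before Theorem \ref{theorem1}), the extra $\mathcal S_H(m_{(-1)})$ terms collapse via the antipode axiom. What remains is $p_M(am)$.

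I expect the main obstacle to be the bookkeeping of bicharacter scalars, which is where symmetry of the bicharacter and Lemma \ref{lemma1} are used to kill factors next to $\varepsilon_H$. An alternative route, which I would try if the direct computation gets out of hand, avoids computing $p_M(a\,p_M(m))$ head-on. It uses Theorem \ref{theorem2}: write $m=\sum a_i m_i$ with $m_i\in M^{coH}$. Then it suffices to check that $p_M(a\,p_M(a'n))=p_M(aa'n)$ for $n\in M^{coH}$. Here $p_M(a'n)=p_A(a')n$ because $n$ is coinvariant, which reduces everything to the identity $p_A(a\,p_A(a'))=p_A(aa')$ in $A$ itself. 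That identity is a cleaner computation, since $p_A(a')\in B$.

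For part 2, take $m\in M^{coH}$. Then $a\rightharpoonup m=p_M(am)$ lies in $M^{coH}$ by Lemma \ref{lemma7}, and it is homogeneous of degree $\mid a\mid\mid m\mid$ because $p_M$ is graded. For the unit, $1_A\rightharpoonup m=p_M(m)=m$. For associativity, part 1 gives
$$a\rightharpoonup(a'\rightharpoonup m)=p_M(a\,p_M(a'm))=p_M(aa'm)=(aa')\rightharpoonup m.$$
Bilinearity is clear. The special case $M=A$ gives the statement for $A^{coH}$.
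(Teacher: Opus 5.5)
Your proposal is correct, and your route to part 1 differs from the paper's and is shorter.

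\textbf{The paper's route.} The paper proves $p_M(a\,p_M(m))=p_M(am)$ by brute force. It expands the coaction of the triple product $a\,\theta({\mathcal S}_H(m_{(-1)}))\,m_{(0)}$ using the Hopf-module compatibility, colinearity of $\theta$, the formula for $\Delta_H\circ{\mathcal S}_H$, the four-fold coassociativity identity, the antipode axiom and Lemma~\ref{lemma1}. This yields $\theta({\mathcal S}_H(a_{(-1)}))\,a_{(0)}\,\theta({\mathcal S}_H(m_{(-1)}))\,m_{(0)}$. It then computes $p_M(am)$ separately and checks that the two agree.

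\textbf{Your route.} You isolate the identity
\[
p_M(xn)=p_A(x)\,p_M(n)\qquad (x\in A,\ n\in M),
\]
which is what the paper's second computation really establishes: the common value of both sides there is exactly $p_A(a)p_M(m)$. The four factors $(\mid x_{(0)}\mid/\mid n_{(-1)}\mid)(\mid x_{(-1)}\mid/\mid n_{(-1)}\mid)(\mid n_{(-1)}\mid/\mid x_{(-1)}\mid)(\mid n_{(-1)}\mid/\mid x_{(0)}\mid)$ cancel by symmetry, which shows precisely where colour-commutativity and symmetry are used. You combine this with $p_A(\theta(h))=\theta({\mathcal S}_H(h_1))\theta(h_2)=\varepsilon_H(h)1_A$, which holds with no twist scalar.

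\textbf{Simplifications available.} Your argument then closes without coassociativity:
\[
p_M(a\,p_M(m))=(\mid a\mid/\mid m_{(-1)}\mid)\,\varepsilon_H(m_{(-1)})\,p_M(am_{(0)})=p_M(am).
\]
This uses only $\varepsilon_H\circ{\mathcal S}_H=\varepsilon_H$, Lemma~\ref{lemma1} and the counit axiom. So your remark about the four-fold formula and the antipode collapsing the extra terms can simply be dropped. Even your first commutation step is unnecessary. Applying multiplicativity with $x=a$, $n=p_M(m)$, together with Lemma~\ref{lemma7}(2), gives at once
\[
p_M(a\,p_M(m))=p_A(a)\,p_M(m)=p_M(am).
\]

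\textbf{What each approach buys.} Your approach gives a conceptual statement ($p_M$ is multiplicative over $p_A$) and avoids $\Delta_H\circ{\mathcal S}_H$ entirely. The paper's approach is a self-contained Sweedler computation but hides this structure.

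Your fallback via Theorem~\ref{theorem2} is valid but not really simpler, since $p_A(aa')=p_A(a)p_A(a')$ is the same multiplicativity computation. Part 2 matches the paper's argument.
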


\begin{proof} 1. We have

$$ \begin{array}{l} a\rightharpoonup p_M(m)=p_M(ap_M(m)) \\
=p_M[a\theta({\mathcal S}_{H}(m_{(-1)}))m_{(0)}] \\
=\theta({\mathcal S}_{H}\{[a\theta({\mathcal S}_{H}(m_{(-1)}))m_{(0)}]_{(-1)}\})[a\theta({\mathcal S}_{H}(m_{(-1)}))m_{(0)}]_{(0)} \\
=(\mid [a\theta({\mathcal S}_{H}(m_{(-1)}))]_{(0)} \mid / \mid m_{(0)(-1)}\mid)\theta ({\mathcal S}_{H} \{[a\theta({\mathcal S}_{H}(m_{(-1)}))]_{(-1)}m_{(0)(-1)}\})[a\theta({\mathcal S}_{H}(m_{(-1)}))]_{(0)}m_{(0)(0)} \\
=(\mid a_{(0)}\theta({\mathcal S}_{H}(m_{(-1)}))_{(0)} \mid / \mid m_{(0)(-1)}\mid)(\mid a_{(0)}\mid / \mid \theta({\mathcal S}_{H}(m_{(-1)}))_{(-1)}\mid ) \\
\theta({\mathcal S}_{H} \{a_{(-1)}\theta({\mathcal S}_{H}(m_{(-1)}))_{(-1)}m_{(0)(-1)}\})(a_{(0)}\theta({\mathcal S}_{H}(m_{(-1)}))_{(0)}m_{(0)(0)}) \\
=(\mid a_{(0)}\theta({\mathcal S}_{H}(m_{(-1)})_{(0)}) \mid / \mid m_{(0)(-1)}\mid)(\mid a_{(0)}\mid / \mid {\mathcal S}_{H}(m_{(-1)})_{(-1)}\mid) \\
\theta({\mathcal S}_{H}\{a_{(-1)}{\mathcal S}_{H}(m_{(-1)})_{(-1)}m_{(0)(-1)}\})(a_{(0)}\theta({\mathcal S}_{H}(m_{(-1)})_{(0)})m_{(0)(0)}) \\
=(\mid a_{(0)}\theta({\mathcal S}_{H}(m_{(-1)})_{2}) \mid / \mid m_{(0)(-1)}\mid)(\mid a_{(0)}\mid / \mid {\mathcal S}_{H}(m_{(-1)})_{1}\mid ) \\
\theta({\mathcal S}_{H}\{a_{(-1)}{\mathcal S}_{H}(m_{(-1)})_{1}m_{(0)(-1)}\})(a_{(0)}\theta({\mathcal S}_{H}(m_{(-1)})_{2})m_{(0)(0)}) \\
=(\mid a_{(0)}\mid \mid\theta({\mathcal S}_{H}(m_{(-1)1})) \mid / \mid m_{(0)(-1)}\mid)(\mid a_{(0)}\mid / \mid {\mathcal S}_{H}(m_{(-1)2})\mid )(\mid m_{(-1)1} \mid / \mid m_{(-1)2 \mid}) \\
\theta({\mathcal S}_{H} \{a_{(-1)}{\mathcal S}_{H}(m_{(-1)2})m_{(0)(-1)}\})(a_{(0)}\theta({\mathcal S}_{H}(m_{(-1)1}))m_{(0)(0)}) \\
=(\mid a_{(0)}\mid \mid m_{(-1)1} \mid / \mid m_{(0)(-1)}\mid)(\mid a_{(0)}\mid / \mid m_{(-1)2}\mid )(\mid m_{(-1)1} \mid / \mid m_{(-1)2} \mid) \\
\theta({\mathcal S}_{H}\{a_{(-1)}{\mathcal S}_{H}(m_{(-1)2})m_{(0)(-1)}\})(a_{(0)}\theta({\mathcal S}_{H}(m_{(-1)1}))m_{(0)(0)}) \\
=(\mid a_{(0)}\mid  \mid / \mid m_{(0)(-1)}\mid)(\mid m_{(-1)1} \mid / \mid m_{(0)(-1)})(\mid a_{(0)}\mid / \mid m_{(-1)2}\mid )(\mid m_{(-1)1} \mid / \mid m_{(-1)2} \mid) \\
\theta({\mathcal S}_{H}\{a_{(-1)}{\mathcal S}_{H}(m_{(-1)2})m_{(0)(-1)}\})(a_{(0)}\theta({\mathcal S}_{H}(m_{(-1)1}))m_{(0)(0)}) \\
=(\mid a_{(0)}\mid / \mid m_{(-1)2}\mid )(\mid a_{(0)}\mid / \mid m_{(-1)1}\mid)(\mid m_{(-2)} \mid / \mid m_{(-1)2 \mid})(\mid m_{(-2)} \mid /\mid m_{(-1)1} \mid ) \\
\theta({\mathcal S}_{H} \{(a_{(-1)}{\mathcal S}_{H}(m_{(-1)1})m_{(-1)2})\}(a_{(0)}\theta({\mathcal S}_{H}(m_{(-2)}))m_{(0)}) \\
=(\mid a_{(0)}\mid / \mid m_{(-1)}\mid)(\mid m_{(-2)} \mid /\mid m_{(-1)} \mid ) \\
\theta[{\mathcal S}_{H}\{a_{(-1)}(\varepsilon_H(m_{(-1)})1_H)\}](a_{(0)}\theta({\mathcal S}_{H}(m_{(-2)}))m_{(0)}) \\
=(\mid a_{(0)}\mid / \mid m_{(-1)}\mid)(\mid m_{(-2)} \mid /\mid m_{(-1)} \mid )\varepsilon_H(m_{(-1)}) \\
\theta({\mathcal S}_{H}(a_{(-1)}))(a_{(0)}\theta({\mathcal S}_{H}(m_{(-2)}))m_{(0)}) \\
=\theta({\mathcal S}_{H}(a_{(-1)}))(a_{(0)}\theta({\mathcal S}_{H}(m_{(-1)}))m_{(0)}) \\
=(\mid a_{(0)}\mid / \mid \theta( {\mathcal S}_{H}(m_{(-1)}))\mid)\theta({\mathcal S}_{H}(a_{(-1)}))\theta({\mathcal S}_{H}(m_{(-1)}))(a_{(0)}m_{(0)}) \\
=(\mid a_{(0)}\mid / \mid m_{(-1)}\mid)\theta({\mathcal S}_{H}(a_{(-1)}))\theta({\mathcal S}_{H}(m_{(-1)}))(a_{(0)}m_{(0)}) \end{array}$$
The ninth equality is true because $\theta$ and ${\mathcal S}_H$ are homogeneous of degree $e$. The tenth equality uses the formula mentioned before Theorem $\ref{theorem1}$.

We also have
$$\begin{array}{l} p_M(am) \\
= \theta({\mathcal S}_{H}((am)_{(-1)}))(am)_{(0)}\\
=(\mid a_{(0)} \mid / \mid m_{(-1)} \mid) \theta({\mathcal S}_{H}(a_{(-1)}m_{(-1)}))(a_{(0)}m_{(0)})\\
=(\mid a_{(0)} \mid / \mid m_{(-1)} \mid)(\mid a_{(-1)}\mid / \mid m_{(-1)}\mid) \theta[{\mathcal S}_{H}(m_{(-1)}){\mathcal S}_{H}(a_{(-1)})](a_{(0)}m_{(0)})\\
=(\mid a_{(0)} \mid / \mid m_{(-1)} \mid)(\mid a_{(-1)}\mid / \mid m_{(-1)}\mid) \theta({\mathcal S}_{H}(m_{(-1)}))\theta({\mathcal S}_{H}(a_{(-1)}))(a_{(0)}m_{(0)})\\
=(\mid a_{(0)} \mid / \mid m_{(-1)} \mid)(\mid a_{(-1)}\mid / \mid m_{(-1)}\mid)(\mid \theta({\mathcal S}_{H}(m_{(-1)}) \mid / \mid \theta({\mathcal S}_{H}(a_{(-1)})\mid)\\ \theta({\mathcal S}_{H}(a_{(-1)}))\theta({\mathcal S}_{H}(m_{(-1)}))(a_{(0)}m_{(0)})\\
=(\mid a_{(0)} \mid / \mid m_{(-1)} \mid)(\mid a_{(-1)}\mid / \mid m_{(-1)}\mid)(\mid m_{(-1)} \mid / \mid a_{(-1)}\mid) \theta({\mathcal S}_{H}(a_{(-1)}))\theta({\mathcal S}_{H}(m_{(-1)}))(a_{(0)}m_{(0)})\\
=(\mid a_{(0)} \mid / \mid m_{(-1)} \mid)\theta({\mathcal S}_{H}(a_{(-1)}))\theta({\mathcal S}_{H}(m_{(-1)}))(a_{(0)}m_{(0)})\\
\end{array}$$ 
The last equality is true since the bicharacter is symmetric.
Thus we have proved that 
$$a \rightharpoonup p_M(m)= p_M(am) \quad \forall a \in A ; m \in M.$$

2. Let $a,a' \in A$ and $m \in M$. We have
$$\begin{array}{l} a \rightharpoonup (a' \rightharpoonup m) = a \rightharpoonup( a' \rightharpoonup p_M(m))= a \rightharpoonup p_M((a' p_M(m))) \\
=p_M(a(a' p_M(m)))= p_M((aa') p_M(m))=(aa') \rightharpoonup p_M(m)) \\
=(aa') \rightharpoonup m.\end{array}$$ The first, fifth and sixth equalities are true by 1. The second and the third equalities use the definition of $p_M$. The fourth equality is true since $M$ is an $A$-module. We also have 
$1_A \rightharpoonup m=p_M(1_Am)=p_M(m)=m$ for all $m \in M^{coH}$.
\end{proof}

We arrive to the last application of Theorem $\ref{theorem2}$: it is a generalization of \cite[Theorem 9]{Wang}.  

\begin{theorem} \label{theorem4}. Let $H$ be a colour Hopf algebra, $A$ a colour-commutative graded $H$-comodule colour algebra, $B=A^{coH}$ and $M$ a colour $(A,H)$-Hopf module. Assume that $B$ is a colour division ring (for example, if $A$ is colour $H$-simple), and there is a graded $H$-colinear map $\theta : H \rightarrow A$ which is a homomorphism of colour algebras and the bicharacter is symmetric. Then 
$$\textnormal{gr}.gldim A=\textnormal{gr}.pdim_AB=pdim_AB;$$ where $\textnormal{gr}.gldim A$ and $\textnormal{gr}.pdim_A$ denote respectively the graded global dimension of $A$ and the graded projective dimension over $A$; $pdim_A$ is the projective dimension over $A$.
\end{theorem}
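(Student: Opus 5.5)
The plan is to imitate Lorenz--Lorenz and Wang: show that every graded $A$-module $N$ has $\textnormal{gr}.pdim_A N \le \textnormal{gr}.pdim_A B$. The reverse inequality $\textnormal{gr}.pdim_A B\le \textnormal{gr}.gldim A$ is trivial. Here $B$ is a graded $A$-module by Proposition \ref{proposition4}. For the last equality, $pdim_A B=\textnormal{gr}.pdim_A B$, I would use the standard graded ring theory fact from \cite{NV1,NV2}: a graded module is projective in ${}_{\textnormal{gr}-A}\mathcal{M}$ iff it is projective in ${}_A\mathcal{M}$. Taking a graded free resolution of $B$, its syzygies are graded, so the two projective dimensions coincide.

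For the main inequality, let $n=\textnormal{gr}.pdim_A B$ and fix a graded projective resolution $P_\bullet\to B\to 0$ of length $n$. Given a graded $A$-module $N$, I would tensor over $B$ and consider $P_\bullet\otimes_B N\to B\otimes_B N\cong N\to 0$, with the $A$-action of Proposition \ref{proposition3}(1), namely $a(p\otimes_B m)=(\mid a_{(0)}\mid/\mid p\mid)(\theta(a_{(-1)})p)\otimes_B(a_{(0)}m)$. Since $B$ is a colour division ring, every graded $B$-module is graded free, hence flat. The complex $P_\bullet\otimes_B N$ is therefore exact. One must check that the augmentation is $A$-linear for the twisted action on $B\otimes_B N$. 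Under $B\otimes_B N\cong N$, the action becomes $a\cdot n=(a_{(0)}\rightharpoonup\text{-twisted})$; I would verify that this recovers the original action on $N$ via $p_A$ and the identities $\theta(a_{(-2)}{\mathcal S}_H(a_{(-1)}))a_{(0)}=\varepsilon$-type cancellations. If it does not match directly, I would instead use an isomorphism of $A$-modules $B\otimes_B N\cong N$ induced by the untwisting map $p\otimes n\mapsto \theta({\mathcal S}_H(\dots))$, in the spirit of the Doi--Koppinen twist.

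The key step, and the one I expect to be the main obstacle, is to show that each $P_i\otimes_B N$ is graded projective over $A$ with the twisted action. It suffices to treat $P_i=A(g)$, since projectives are summands of graded frees and the construction commutes with direct sums and shifts. By Proposition \ref{proposition3}(2), $A\otimes_B N$ is a colour $(A,H)$-Hopf module, so by Theorem \ref{theorem2} it is isomorphic to $A\otimes_B(A\otimes_B N)^{coH}$. Corollary \ref{corollary4} then gives that it is graded free over $A$, because $B$ is a colour division ring. The delicate point is that the module structure used in Corollary \ref{corollary4} must be the twisted action of Proposition \ref{proposition3}(2), and this requires the Hopf-module compatibility verified there; colour-commutativity of $A$ and symmetry of the bicharacter are what make these sign computations close up.

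Combining these steps, $P_\bullet\otimes_B N$ is a graded projective resolution of $N$ of length $n$. Hence $\textnormal{gr}.pdim_A N\le n$ for all $N$, so $\textnormal{gr}.gldim A\le \textnormal{gr}.pdim_A B$, and the theorem follows.
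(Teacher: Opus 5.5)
Your proposal is correct and follows the paper's proof essentially step for step. $B$ is made a graded $A$-module via Proposition~\ref{proposition4}, a graded projective resolution of $B$ is tensored over $B$ with $N$ using the action of Proposition~\ref{proposition3}, and exactness comes from $B$ being a colour division ring. Projectivity of the terms comes from Proposition~\ref{proposition3}(2), Theorem~\ref{theorem2} and Corollary~\ref{corollary4}, and the last equality is the standard graded-versus-ungraded comparison.

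The augmentation check you left open, which the paper also passes over, works directly with no untwisting map. Colinearity of $\theta$ gives $p_A(\theta(h))=\theta(\mathcal{S}_H(h_1))\theta(h_2)=\varepsilon_H(h)1_A$, so the twisted action on $B\otimes_B N$ is $a(b\otimes_B n)=(\mid a\mid/\mid b\mid)\,b\otimes_B an$. Under $b\otimes_B n\mapsto bn$ this goes to $abn$, by colour-commutativity of $A$ and symmetry of the bicharacter.
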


\begin{proof} By Proposition $\ref{proposition4}$, $B$ is a graded left $A$-module. Let us consider the projective resolution of $B$ in the category of graded left $A$-modules
$$...\rightarrow P_2 \rightarrow P_1 \rightarrow B \rightarrow 0.$$ 
Let $M$ be a graded left $A$-module. Since $B$ is a colour division ring, we have by Proposition $\ref{proposition3}$, an exact sequence of graded left $A$-modules
$$...\rightarrow P_2 \otimes_BM \rightarrow P_1 \otimes_BM \rightarrow B \otimes_BM \simeq M \rightarrow 0.$$ 
If $P$ is a graded projective right $A$-module, we know that $P$ is a direct summand in a graded free right $A$-module, that is, we have $P \oplus Q \simeq \oplus_{i \in I}A(x_i)$ as a graded right $A$-module for some graded right $A$-module $Q$; where $I$ is an index set and $(x_i, i \in I)$ is a family of elements of $G$; We deduce that
$$(P \otimes_BM) \oplus (Q  \otimes_BM) \simeq (\oplus_{i \in I}A(x_i)) \otimes_BM \simeq \oplus_{i \in I}(A(x_i) \otimes_BM) \simeq \oplus_{i \in I}(A\otimes_BM)(x_i).$$ 
By Proposition $\ref{proposition3}$, $A \otimes_BM$ is a colour $(A,H)$-Hopf module. Now $B$ is a colour division ring. So by Theorem $\ref{theorem2}$ and Corollary  $\ref{corollary4}$, $A \otimes_BM$ is projective as a graded left $A$-module. Thus each $(A \otimes_BM)(x_i)$ is a projective graded left $A$-module. It follows that $P \otimes_BM$ is a projective graded left $A$-module being a direct summand of the projective graded left $A$-module $\oplus_{i \in I}(A\otimes_BM)(x_i)$. So we have proved that the above exact sequence of graded left $A$-modules is a projective resolution of $M$ in the category of graded left $A$-modules. Thus we have $\textnormal{gr}.pdim_AM \leq \textnormal{gr}.pdim_AB$ for any graded left $A$-module $M$. It follows that $\textnormal{gr}.gldim A \leq \textnormal{gr}.pdim_AB$. It is clear that $\textnormal{gr}.pdim_AB \leq \textnormal{gr}.gldim A$. For the last equality $\textnormal{gr}.pdim_AB=pdim_AB$, see \cite[I.2.7]{NV1}.     
\end{proof}

\begin{remarks} 
\begin{enumerate}
\item In Wang's paper \cite{Wang}, $A=H$, $B=H^{coH}$ is the base field $\Bbbk$ which we can consider as a colour division ring with a trivial gradation and a trivial $H$-coaction. 
	
\item The first assertion in Proposition $\ref{proposition3}$ appears in the proof of \cite{Wang} without proof because the proof is easy since the tensor product is over $\Bbbk$. The second assertion of Proposition $\ref{proposition3}$ corresponds to Proposition 8 in Wang's paper. 

\item In Proposition $\ref{proposition4}$, we assumed that $A$ is a colour-commutative graded $H$-comodule algebra and the bicharacter is symmetric to prove that $A^{coH}$ is a graded $A$-module. This result enabled us to prove Theorem $\ref{theorem4}$. In Wang's paper, $A^{coH}=\Bbbk$ is always considered as a trivial graded $H$-module.  
\end{enumerate}
\end{remarks}

We can apply our results in the following categories.
	
$\bullet$ The category of colour Hopf modules: take $ A=H $ and $\theta =id_{H}$. If furthermore, $G=\{e\}$, then there is no gradation on $H$, we are in the category of Hopf modules.
	
$\bullet$ The category of $(A,H)$-Hopf modules: take $G=\{e\}$, so there is no gradation on $ A $ and $ H $.
	
$\bullet$ $H$ is a Hopf algebra coacting on a colour algebra $A$ in such a way that the coaction is compatible with the gradation and the product of $A$. We are in the category $_{\textnormal{gr}-A}^H{\mathcal M}$ of (gr-$A,H$)-Hopf modules, that is, the category of graded left $A$-modules, left $H$-comodules such that the $H$-coaction is compatible with the $A$-action and with the gradation.
	
$\bullet$ The category of graded $(A,H)$-Hopf modules: 
the bicharacter is trivial, that is, we are in the classical gradation.

\bigskip


\end{document}